\documentclass[sn-apa]{sn-jnl}
\usepackage{soul}
\usepackage{graphicx}
\usepackage{multirow}
\usepackage{amsmath,amssymb,amsfonts}
\usepackage{amsthm}
\usepackage{mathrsfs}
\usepackage[title]{appendix}
\usepackage{xcolor}
\usepackage{textcomp}
\usepackage{manyfoot}
\usepackage{booktabs}
\usepackage{algorithm}
\usepackage{algorithmicx}
\usepackage{lmodern}
\usepackage{algpseudocode}
\usepackage{listings}
\usepackage{subcaption}
\usepackage{pdflscape}
\usepackage{adjustbox}
\usepackage{rotating}
\usepackage{booktabs}
\usepackage{longtable}
\theoremstyle{plain}
\newtheorem{theorem}{Theorem}
\newtheorem{proposition}{Proposition}
\theoremstyle{definition}

\newcommand{\argmax}{\mathop{\mathrm{argmax}}\limits}
\theoremstyle{remark}
\newtheorem{definition}{Definition}

\begin{document}

\title[Article Title]{MTRBO: Multiple trust-region based Bayesian optimization}
\author[1]{\fnm{Sourav} \sur{Das}}\email{souravdas.maths@kgpian.iitkgp.ac.in}

\author[1]{\fnm{Debjani} \sur{Chakraborty}}\email{debjani@maths.iitkgp.ac.in}

\author[2]{\fnm{Pabitra} \sur{Mitra}}\email{pabitra@cse.iitkgp.ac.in}

\affil[1]{\orgdiv{Department of Mathematics}, \orgname{Indian Institute of Technology, Kharagpur}, \orgaddress{\city{Kharagpur}, \postcode{721302}, \state{West Bengal}, \country{India}}}


\affil[2]{\orgdiv{Department of Computer Science and Engineering}, \orgname{Indian Institute of Technology, Kharagpur}, \orgaddress{\city{Kharagpur}, \postcode{721302}, \state{West Bengal}, \country{India}}}
\abstract{Bayesian Optimization (BO) is a popular framework for optimizing black-box functions. Despite its effectiveness, BO is often inefficient for high-dimensional problems due to the exponential growth of the search space, heterogeneity of the objective function, and low sampling budget. To overcome these issues this work proposes a multiple trust region-based Bayesian optimization technique(MTRBO). A trust region is a localized region within which an optimization model is trusted to approximate the objective function accurately. Assuming a Gaussian process (GP) as a prior belief about the objective function and based on the posterior mean and variance functions, the method adaptively exploits near the promising current solution inside a trust region. Also explores the most uncertain region in the search space inside another trust region. The theoretical global convergence property of the proposed method is established. Then the work is benchmarked against other state-of-the-art trust-region-based Bayesian optimization algorithms, demonstrating superior performance on a variety of non-convex and high-dimensional test functions. The proposed method outperforms others in terms of solution quality within the sampling budget (the number of function evaluations). The proposed method is applied to portfolio optimization problem to verify its applicability in real-world scenarios.}

\keywords{Gaussian process, Bayesian optimization, Trust-region, Global optimization}

\maketitle

\section{Introduction}\label{Introduction}
In the field of engineering and scientific research, optimization often involves tackling computationally expensive (hard to evaluate, time-consuming, high evaluation cost, etc), black-box functions where traditional methods falter due to the lack of information about known mathematical properties like continuity, differentiability, convexity, etc. Bayesian Optimization (BO) has emerged as a powerful tool for these scenarios, generally using Gaussian processes to model and optimize complex objectives with limited evaluations. Bayesian optimization first originated from the work of \citep{kushner1964new}, where a Brownian motion stochastic process is assumed as prior for the objective function and then introduces the probability of improvement acquisition function for finding the location of the maximum point of an arbitrary multi-peak curve in the presence of noise. Another acquisition function, Expected improvement, is developed in \citep{movckus1975bayesian}. Though these are a few early works, Bayesian optimization got more attention after the work of \citep{jones1998efficient}, where the author proposed an efficient global optimization (EGO) algorithm for expensive black-box functions. Many variants of Bayesian optimization have been proposed over the years, \citep{du2022bayesian}, \citep{10093035}, and \citep{wang2024pre} being recent works. More details about recent works regarding Bayesian optimization can be found in \citep{wang2023recent}.
\par Despite its success across various fields like hyper-parameter tuning, robotics, material sciences, etc., BO faces challenges in high-dimensional search space settings, where its scalability and efficiency are tested. As \citep{eriksson2019scalable} pointed out, optimizing high-dimensional problems presents several challenges. Firstly, as the dimensionality increases, the search space expands exponentially, making it harder to locate global optima from increased local optima. Secondly, the function itself is often heterogeneous, which complicates the process of creating an effective global surrogate model. Lastly, the search space grows much faster than the sampling budget (total number of function evaluations possible) due to the curse of dimensionality, leading to regions with significant posterior uncertainty. This often causes common acquisition functions to excessively focus on exploration at the expense of exploiting potentially promising areas. Recent advancements aim to overcome these limitations with various different approaches. Numerous techniques leverage potential additive structures in the objective function. For example, \citep{kandasamy2015high}, \citep{gardner2017discovering}, \citep{wang2018batched}. These approaches often involve training a substantial number of Gaussian Processes (GPs), each representing different additive structures, which makes them less scalable to larger evaluation budgets. There are alternative methods \citep{wang2016bayesian}, \citep{nayebi2019framework} that depend on mapping the high-dimensional space to an unknown lower-dimensional subspace, allowing them to handle a large number of observations. Also, to handle a large number of observations, large-scale Bayesian optimization often involves selecting points in batches for parallel evaluation. Although various batch acquisition functions have been recently introduced \citep{chevalier2013fast}, \citep{shah2015parallel}, \citep{gonzalez2016batch}, these methods generally struggle to scale effectively with large batch sizes in practice. 
\par Though these approaches provide different directions to overcome the limitations of the traditional BO method, they are still not without limitations. Additive structure-based methods can often become computationally expensive and inefficient as the evaluation budget increases. Managing a large number of GPs is challenging and can lead to scalability problems with large evaluation budgets. Methods that map the high-dimensional space to a lower-dimensional subspace rely on specific assumptions about the structure of the objective function. These assumptions may not always hold in practice, leading to potential inaccuracies in the surrogate model. Although batch acquisition functions are designed to evaluate multiple points in parallel, they often face challenges when dealing with large batch sizes. Scaling these methods effectively for large batches remains problematic, as they may not perform well in practice due to increased computational requirements and difficulties in managing large numbers of parallel evaluations.
\par To overcome these limitations of BO and recent approaches to tackle the issues, this work proposes a multiple trust region-based Bayesian optimization (MTRBO) algorithm that considers two trust regions for exploration and exploitation separately in each iteration. For exploration, the region with the highest uncertainty after fitting a Gaussian process model based on the observed values to the expensive black-box objective function is considered. For exploitation, the trust region near the current best-observed value is considered, which is exploited for a few sub-iterations to find the maximizer of the posterior mean function, which itself converges to the actual objective function in the long run. So, at each iteration, the proposed method searches in a very small region compared to the search space without compromising on exploitation and exploration. The method is fine-tuned to not become sensitive to high exploration, which is a drawback of traditional BO in high-dimensional settings. At each iteration, the method first predicts two possible query points for the next iteration. One is from the exploration stage which is the maximizer of the acquisition function in the exploration trust region, and the other one is found exploiting near the current best-observed value, which maximizes the current posterior mean function over a few sub-iterations with different trust regions based on the ration of actual increment (on posterior mean) over predicted mean (on acquisition function). This is valid as in the long run, the posterior mean function converges to the actual objective function. Main contributions of this work are 
\begin{itemize}
    \item Proposing a global optimization method named multiple trust region-based Bayesian optimization (MTRBO).
    \item Establishing its theoretical global convergence property.
\end{itemize}
Recently, there have been a few developments on this approach \citep{regis2016trust}, \citep{eriksson2019scalable}, \citep{diouane2023trego}, \citep{li2023trust}. Comparisons of these approaches with the proposed approach are made in Section \ref{related work}.
\par The remainder of the work is organized as follows: Section \ref{Preliminaries} presents basic ideas about Gaussian process and traditional Bayesian optimization. Next, in Section \ref{MTRBO}, the proposed multiple trust region-based Bayesian optimization is discussed, and then Section \ref{Convergence analysis} provides the theoretical analysis of global convergence for the proposed method. And the results of the experiments, along with a comparison with existing trust region-based Bayesian optimization methods, are discussed in Section \ref{Experiments and results}. Finally, Section \ref{Conclusion} gives the overall conclusion of the proposed method.


\section{Related work}\label{related work}
TRIKE \citep{regis2016trust} employs a trust-region strategy in which each iteration is determined by maximizing an Expected Improvement (EI) function within a specified trust region. The size of this trust region is modified based on the ratio between the actual improvement and the predicted EI.
\par
TuRBO \citep{eriksson2019scalable} algorithm constructs a set of local models and strategically allocates samples among them using an implicit bandit method for global optimization.
\par
In TRLBO \citep{li2023trust}, two dynamically adjusting trust regions are employed to improve the algorithm's exploitation capabilities while maintaining its exploration potential. Specifically, one trust region helps minimize the number of samples in the Gaussian process, while the other limits the solution space for candidate points.
\par
TREGO \citep{diouane2023trego} alternates between regular efficient global optimization (EGO) steps and local steps within a trust region.
\par
All the previous work except TREGO, is mainly focused on the exploitation near the current best solution to reach a local optima. TRIKE uses a restart strategy by generating new initial observations if the EI falls below a threshold and exploits near the current best again. TuRBO does simultaneous Bayesian optimization runs with independent Gaussian process(GP) models, each within a different trust region. TRLBO uses a trust region to reduce the number of observations in the GP model, and another trust region to exploit near the current best observation. This method only focuses on the local optimization. Though TRIKE and TuRBO have some exploration potential, still exploration is not guaranteed. The argument behind the focus on exploitation only is that common acquisition functions mainly focus on exploration as the dimension of the search space increases. TREGO does consider the exploration but, this method does not specifically reduce the search space, by default at each iteration regular global search over the whole search space except for a few iterations when the global stage fails to improve the solution sufficiently. So, the fundamental problem, i.e, the Bayesian optimization not performing well in case of high-dimensional search space, is not properly solved. Some methods fail to explore the search space properly, some ignore the exploitation, leading to heavy exploration, and some techniques need additional properties of the objective function. The proposed MTRBO algorithm overcomes all the issues by considering two trust regions, one for exploring the region with the highest uncertainty, and the other one for exploiting near current best solution. The algorithm is fine-tuned such that it does not overly explore (tackling the issue pointed out in TuRBO) by exploiting near the current best even if the exploration stage provides a better query point in terms of maximizing the acquisition function. After the exploitation is completed, if still that point is better then only the algorithm shifts to that point for the next iteration. MTRBO balances between exploration and exploitation and also the search space is reduced only to the trust regions.


\section{Preliminaries}\label{Preliminaries}
Throughout the work, without loss of generality, all optimization problems in this paper are formulated as maximization problems. If a minimization objective is encountered, it is internally converted by negating the function value.
\subsection{Gaussian process}\label{GP}
A Gaussian process, $\mathcal{G}\mathcal{P}$, is a generalization of the multivariate Gaussian distributions to infinitely many variables. It is distributed over functions. Formally we can say that,
\begin{definition}
Gaussian Process is a collection of random variables, any finite number of which are Multi-variate Gaussian.
\end{definition}
Assume a function $f : \mathcal{X} \rightarrow{} \mathbb{R}$ follows Gaussian process, i.e, $f(x) \sim\mathcal{G}\mathcal{P}(\mu , k)$. Where, $\mu(x) : \mathcal{X} \rightarrow{} \mathbb{R} $, defined as $\mu(x) = \mathbb{E}[f(x)]$,  is the mean function and $k : \mathcal{X} \times \mathcal{X} \rightarrow{}  \mathbb{R}$, is covariance or kernel function. Here $\mu(x)$ is the average function value of all functions present in the distribution at the point $x$ and $k(x, x^{'})$ represents the dependence between the function values at different input points. A kernel is usually chosen based on the assumption that two points are more correlated as their distance decreases. \par Consider a finite collection of $n$ points, $x_{1},x_{2},\cdots, x_{n} \in\mathcal{X}$. Then their function values are $f_{i} = f(x_{i})$, for $i = 1,2,\cdots, n$. From the assumption $f_{1:n} = [f_{1}, f_{2}, \cdots , f_{n}]$ are Jointly Gaussian with the mean vector  $\mu_{1:n} = [\mu_{1}, \mu_{2}, \cdots , \mu_{n}]$ and covariance matrix $\mathbf{K}_{i,j} = k(x_{i}, x_{j})$. Now for a new point $x$, its function value $f(x)$ also follows Gaussian distribution with the following mean and variance function.\\
\begin{equation}\label{eq:posterior mean}
  \mu_{n}(x) = \mu(x) + \mathbf{k}(x, x_{1:n})^{T}\mathbf{K}^{-1}(f_{1:n} - \mu_{1:n})
\end{equation}
\begin{equation}\label{eq:posterior variance}
  \sigma^{2}_{n}(x) = k(x,x) - \mathbf{k}(x, x_{1:n})^{T}\mathbf{K}^{-1}\mathbf{k}(x, x_{1:n})
\end{equation}
Here, $\mathbf{k}(x, x_{1:n}) = [k(x, x_{1}), k(x, x_{2}), \cdots , k(x, x_{n})]$. This mean and variance are called posterior mean and variance and the distribution is called posterior distribution. The posterior mean $\mu_{n}(x)$ is a kernel-dependent weighted average between the prior $\mu(x)$ and an estimate $f_{1:n}$ derived from the data and posterior covariance $\sigma^{2}_{n}(x)$ is nothing but a term subtracted from prior covariance $k(x,x)$ corresponding to the variance removed by the previously seen data.
\par The kernel function must be positive definite, in the sense that for any finite collection of points, the kernel matrix formed by pairwise evaluation is positive definite. There are several kernel functions, but this work will use a Squared exponential kernel, also known as the Radial basis kernel, due to its smoothness properties and extensive usage in GP literature, and it is computationally efficient. If prior knowledge about the objective is known that, it is highly fluctuating, then the Matern kernel will be a better choice. Squared exponential kernel is defined as 
\begin{equation}\label{eq:3.3}
  k(x,x^{'}) = s^{2}\exp\left({- \frac{(x - x^{'})^2}{2l^{2}}}\right)
\end{equation}
where $s$ is the scale factor and $l$ is the length scale.
\subsection{Bayesian optimization}\label{BO}
Bayesian Optimization is a sequential model-based\footnote{Sequential model-based optimization cycles through the process of fitting models and using them to decide which options to examine.} method for carrying out global optimization of unknown, expensive-to-evaluate, black-box objectives.   
A probabilistic model, which captures our belief about the behavior of the unknown objective function, and an acquisition function, which determines where to sample next, are the two main components of Bayesian optimization. After initializing a prior belief about the objective function $f$, which is usually a Gaussian process, $\mathcal{G}\mathcal{P}(\mu(x), k(x, x^{'}))$ and collecting $n$ sample points $x_{1}, x_{2}, \cdots, x_{n}$ with the function values $f(x_{1}), f(x_{2}), \cdots , f(x_{n})$, posterior distribution is updated using Equation \ref{eq:posterior mean} and \ref{eq:posterior variance}, which is used to find the maximizer $x_{*}$ of the acquisition function $\alpha : \mathcal{X} \rightarrow \mathbb{R}$. Then, the posterior is updated with the modified observed values. See Algorithm~\ref{alg:1}.

\begin{algorithm}
\caption{Bayesian Optimization}
\label{alg:1}
\begin{algorithmic}[1]
\State Assume Objective function $f$ follows a prior distribution.
\State Observe value of $f$ at $n$ points.
\While {Condition is true} 
\State Update Posterior distribution on $f$.
\State From the current Posterior distribution find the maximizer of the acquisition 
\Statex \hspace{0.4cm} function.
\State Find the function value of $f$ at the maximizer.
\EndWhile \\
\Return Point giving largest objective function value
\end{algorithmic}
\end{algorithm}
Until now only the statistical model has been discussed, which is mainly Gaussian Process and represents belief about the unknown objective function. However, the procedure to generate the sequence of points in each iteration is not described. Random selection of query points could be possible but that will be a waste, instead selection strategies also known as the acquisition function that uses the posterior model to guide the selection search are used.
\subsection{Acquisition functions}\label{AF}
In Bayesian optimization, the acquisition function determines how the parameter space should be searched with the help of posterior distribution. 
Improvement-based acquisition functions favor the points that are likely to produce improvement upon the previously observed best objective function value. Let $f_{m}^{*}=max(f(x_{n}))$ for all $n\leq m$ be the best value after $m$-th iteration. So at the $(m+1) $-th iteration if the query point is $x_{m+1}$ and the objective value is $f(x_{m+1})$ then there will be an improvement upon $f_{m}^{*}$, if $f(x_{m+1})-f_{m}^{*}> 0$ . 
Another acquisition function is named upper confidence bound, which is very simple but effective if the hyper-parameters are tuned well enough. The followings are two improvement-based acquisition function, which considers the probability of the improvement and expectation of the improvement. Also one optimistic policy-based acquisition function Upper confidence bound.
\subsubsection{Probability of improvement(PI)}
At $(m+1)$-th iteration for an arbitrary point $x\in \mathcal{X}$ the improvement upon  $f_{m}^{*}$ is $f(x)-f_{m}^{*}$. Since $f$ follows the Gaussian process with posterior mean and variance as given in Equation \ref{eq:posterior mean} and \ref{eq:posterior variance}, given the observations $\mathcal{D}=\{(x_{i}, f(x_{i}))\}_{i=1}^{m}$, probability of improvement will be
\begin{equation}\label{eq:3.4}
\alpha(x) = \mathbb{P}[f(x)-f_{m}^{*}> 0] = 1- \mathbb{P}[f(x)\leq f_{m}^{*}] = 1- \Phi\left(\frac{f_{m}^{*}- \mu_{*}(x)}{\sigma_{*}(x)}\right)
\end{equation}
where $\Phi$ is the Standard normal cumulative distribution function. Recall from Section \ref{BO} that, the acquisition function is then maximized to find the maximizer as the next query point. So, at $(m+1)$-th iteration the query point will be
\begin{equation}\label{eq:3.5}
x_{m+1}= argmax_{x}\alpha(x) 
\end{equation}
Though this early strategy in the literature PI \citep{kushner1964new} performs well if the target is known, however in general PI exploits highly  with less exploration which can lead the search procedure to be stuck at a local optimum. To address this problem the following acquisition function considers the expectation of the improvement. 
\subsubsection{Expected improvement(EI)}\label{EI}
The expected improvement acquisition function is defined as the expectation of the improvement of the current functional value over the current best functional value to be positive
\begin{equation}\label{eq:3.6}
\alpha_n(x) = \mathbb{E}[f(x)-f_{n}^{*}> 0] =  (f_{n}^{*}-\mu_{n}(x)) \Phi\left(\frac{f_{n}^{*}- \mu_{n}(x)}{\sigma_{n}(x)}\right) + \sigma_{n}(x) \phi\left(\frac{f_{n}^{*}- \mu_{n}(x)}{\sigma_{n}(x)}\right)
\end{equation}    
where $\Phi$ and $\phi$ are the standard normal cumulative(CDF) and probability density(PDF) function respectively. Similarly as done in Equation \ref{eq:3.5}, one can find the maximizer $x_{n+1}$ in this case also. Intuitively, it can be thought of as a weighted sum of the improvement and uncertainty with weights being standard normal CDF and PDF. i.e, this acquisition function is balancing between the exploitation near the current best objective value and exploring the points where the uncertainty is high. Uncertainty is high means that, the region, where less data has been observed.

\subsubsection{Upper confidence bound(UCB)}\label{ucb} Upper Confidence Bound is a popular optimistic way to balance between exploration and exploitation by considering the weighted sum of posterior mean and variance, defined as
\begin{equation}\label{eq:3.7}
\alpha_n(x) = \mu_{n}(x) + \beta\sigma_{n}(x)
\end{equation}
where $\beta$ is an unknown parameter that represents how much preference is given to the exploration while searching for the next query point.

\subsection{Trust-region based optimization}\label{TR}
Trust-region methods are used to handle optimization problems, by restricting the search for an optimal solution within a region where the model is trusted to be an accurate representation of the objective function. The earliest works on trust-region methods can be traced back to \citep{levenberg1944method} and popularized after the work of \citep{marquardt1963algorithm}, giving name to the method as Levenberg-Marquardt method. A detailed review of the trust region methods can be found in \citep{yuan2015recent}. Unlike line search methods, which choose a direction and then decide how far to go in that direction, trust-region methods determine a region around the current point where the model is a good approximation and then optimize within this region. 
At the $k$-th iteration, a trust-region algorithm for the general optimization problem
\begin{equation*}
    \max_{x \in \mathcal{X}} f(x),
\end{equation*}
where $f(x)$ is the objective function to be maximized and $\mathcal{X} \subseteq \mathbb{R}^D$ is the feasible set, 
obtains the next query point by solving the following trust-region sub-problem:
\begin{equation}
    \max_{x \in \mathcal{X}_k} \alpha_k(x) \text{ subject to } \|x-x_k\| \leq r_k,
    \label{eq:TR1}
\end{equation}
where $\alpha_k(x)$ is an approximation of the objective function $f(x)$ near the current iteration point $x_k$, in the trust region $\mathcal{X}_k$, $\|.\|$ is a norm in $\mathbb{R}^D$, and $r_k > 0$ is the trust-region radius. The size of the trust region is critical and is adjusted dynamically. If the approximation is good (based on some criteria), the region might be expanded; otherwise, it is contracted. A general framework for the trust region optimization method is given in Algorithm (\ref{alg:general_framework_trust_region}).

\begin{algorithm}
\caption{Framework for a trust region optimization}
\label{alg:general_framework_trust_region}
\begin{algorithmic}[1]
\State \textbf{Initialization:} Given $x_1$, construct $\alpha_1(d)$, $\|.\|$, and $r_1 > 0$, set $k := 1$
\While {Convergence condition is not true} 
\State Solve the sub-problem defined in Equation (\ref{eq:TR1}) to find the trial step $s_k$
\State Decision about the acceptance of the trial step. 
\State Define $\alpha_{k+1}(x)$ and find trust region radius $r_{k+1}$
\State k=k+1
\EndWhile 
\end{algorithmic}
\end{algorithm}
Though trust-region methods are useful for solving large-scale nonlinear optimization problems where derivatives are available, extensions of trust-region methods are also used in derivative-free optimization, where the objective function’s derivatives are not available, and surrogate models are employed. In this work, we will focus on the latter kind.
\begin{algorithm}[htbp]
\caption{Multiple trust-region based Bayesian optimization}\label{alg:MTRBO}
\begin{algorithmic}[1]
\State \textbf{Inputs: } \begin{itemize}
    \item Expensive black-box objective function $f: \mathcal{X}(\subseteq\mathbb{R}^D) \rightarrow \mathbb{R}$, where $\mathcal{X}$ is compact.
    \item Initial set of points $S = \{x_1, x_2, \cdots, x_{n_0}\} \subseteq \mathcal{X}$ and $B$ contains the extreme points of the smallest hyper-rectangle containing $\mathcal{X}$.
    \item $N, N^{'}$ are the total number of iterations for global search and exploitation near the current best solution respectively.
    \item Initial exploitation trust-region radius $r_0$.
\end{itemize}
\State \textbf{Output: } The best point discovered by the algorithm.
\State Compute $f(S)= \{f(x_1), f(x_2), \cdots, f(x_{n_0})\}$ and $f_{best}= \text{max}(f(S))$. Set $n= n_0$
\While{$n \leq N$}
\State Update the Gaussian process based on the currently available data \State $\mathcal{D}_n = \{(x_i, f(x_i)): i=1,2,\cdots,n\}$.
\State Posterior mean $\mu_{n}(x) = \mu(x) + \mathbf{k}(x, x_{1:n})^{T}\mathbf{K}^{-1}(f_{1:n} - \mu_{1:n})$
\State Posterior variance $ \sigma^{2}_{n}(x) = k(x,x) - \mathbf{k}(x, x_{1:n})^{T}\mathbf{K}^{-1}\mathbf{k}(x, x_{1:n})$. (Section (\ref{GP}))
\State \textbf{Exploration: } 
\State $d =\mathop{\max}\limits_{x_i\in S \cup B} \{\mathop{\min}\limits_{x_j \in S \cup B, j\neq i} \{d(x_i, x_j)\}\}$. ($d$ is the Euclidean distance in $\mathbb{R}^D$)
\State $x_p = \mathop{\arg max}\limits_{x_i\in S \cup B} \{\mathop{\min}\limits_{x_j \in S \cup B, j\neq i} \{d(x_i, x_j)\}\}$ and $x_q = \mathop{\arg min}\limits_{x_j \in S \cup B, j\neq p} d(x_p, x_j)$
\State $r_{explore}= \frac{d}{2}$ and $x_{explore} = \frac{x_p +x_q}{2}$
\State $\mathcal{R}_{explore} = \{x \in \mathcal{X} | d(x, x_{explore}) < r_{explore}\}$
\State $x_{*} = \mathop{\argmax}\limits_{x \in \mathcal{R}_{explore}} \left(\alpha_{n}(x)\right)$ (Section (\ref{AF}) discusses about $\alpha_{n}$)
\State \textbf{Exploitation near current best:  }

\State Set $m=0$
\State $\mathcal{R}_{exploit} =  \{x \in \mathcal{X} | d(x, x_{best}) < r_m\}$, where $x_{best} =  \mathop{\argmax}\limits_{x \in S} \left(f(S)\right)$.
\State $x_m = x_{best}$ 
\While{$m < N^{'}$}
\State $x_{m+1} = \Pi_{\mathcal{R}^{t,m}_{exploit}}(x_m + \delta_m \nabla \mu_t(x_m))$. where $\Pi$ is projection operator(see Section \ref{MTRBO_explanation})
\State $r_{m+1} =
\begin{cases}
\gamma_{\text{inc}} \cdot r_m & \text{if } \mu_t(x_{m+1}) - \mu_t(x_m) > \eta_{\text{inc}} \cdot r_m, \\
\gamma_{\text{dec}} \cdot r_m & \text{otherwise},
\end{cases}$
\State $\mathcal{R}_{exploit} = $Trust-region centered at $x_{m+1}$ with radius $r_{m+1}$
\State $m=m+1$
\EndWhile
\State $x_{**} = x_{m}$
\State $x_{n+1} = \argmax(\alpha_{n}(x_{*}) , \alpha_{n}(x_{**}))$
\State Compute $f(x_{n+1})$
\State Modify the observed data, $\mathcal{D}_{n+1}= \mathcal{D}_{n} \cup \{(x_{n+1} , f(x_{n+1}))\}$ and $S = S \cup \{x_{n+1}\}$
\State $n=n+1$
\EndWhile \\
\Return $x_{opt} = \mathop{\argmax}\limits_{x \in S} \left(f(S)\right)$ and $f_{opt} = f(x_{opt})$
\end{algorithmic}
\end{algorithm}
\section{Proposed trust-region based Bayesian optimization technique}\label{MTRBO}
As discussed in Section (\ref{Introduction}), the traditional Bayesian optimization cannot perform well if the dimension of the search space is high, generally above $20$. Another thing is that the underlying acquisition function which is used to guide the search in Bayesian optimization becomes multi-modal itself, making the optimization much more difficult. So, in each iteration instead of searching the whole space,  two regions where the objective function is trusted to have the possible optimal solution is searched based on the acquisition function. This reduces the search space considerably, also inside that region multi-modality of the acquisition function reduces, which makes the search easier and effective. Two trust regions are considered at each iteration; one is for exploration which provides information in the highest uncertain area of the search space while the other one is for exploitation, which focuses on the neighborhood of the current best solution. 
\subsection{Multiple trust region based Bayesian optimization(MTRBO) algorithm}\label{MTRBO_explanation}
Suppose, there is an expensive (hard or costly to evaluate), black-box (mathematical properties like continuity, differentiability, convexity, etc are unknown) objective function $f: \mathcal{X}(\subseteq\mathbb{R}^D) \rightarrow \mathbb{R}$, which is to be maximized, where $\mathcal{X}$ is compact subset of $\mathbb{R}^D$. Prior belief about the objective function is taken as a Gaussian process with the mean function $\mu(x): \mathcal{X} \rightarrow \mathbb{R}$ and a kernel function $k(x,x^{'}): \mathcal{X} \times \mathcal{X} \rightarrow \mathbb{R}$. 
The objective function follows a Gaussian process means, that for every point $x \in \mathcal{X}$, its functional value $f(x)$ follows Gaussian distribution. Also for any finite collection of $n_0$ points $\{x_1, x_2, \cdots, x_{n_0}\}$, their functional values $\{f(x_1), f(x_2), \cdots, f(x_{n_0})\}$ follows multivariate Gaussian distribution. If these are the initially observed points, then the corresponding posterior mean, $\mu_{n_0}(x)$ and posterior variance, $\sigma_{n_0}(x)$ are given by the Equations (\ref{eq:posterior mean}) and (\ref{eq:posterior variance}).
\begin{itemize}
    \item Now, at any iteration $k$, two best points that maximize the acquisition function in $\mathcal{R}_{explore}$ and $\mathcal{R}_{exploit}$ respectively are found. Among them, the point giving a higher acquisition function value is predicted as the query point for the next iterations. 
    \item For the \textbf{exploration stage}, the radius $r_{explore}$ of the trust region $\mathcal{R}_{explore}$ is taken as $\frac{d}{2}$ centered at $x_{explore}=\frac{x_p+x_q}{2}$, where $d =\mathop{\max}\limits_{x_i\in S \cup B} \{\mathop{\min}\limits_{x_j \in S \cup B, j\neq i} \{d(x_i, x_j)\}\}$ and $x_p, x_q$ are the points such that $d=d(x_p,x_q)$ which can be found with simple computations like $x_p = \mathop{\arg max}\limits_{x_i\in S \cup B} \{\mathop{\min}\limits_{x_j \in S \cup B, j\neq i} \{d(x_i, x_j)\}\}$ and $x_q = \mathop{\arg min}\limits_{x_j \in S \cup B, j\neq p} d(x_p, x_j)$. $B$ contains all the extreme points of the smallest hyper-cube containing the compact search space $\mathcal{X}$. Only the points inside the search space are considered for optimization purposes. $x_{*} = \mathop{\argmax}\limits_{x \in \mathcal{R}_{explore}} \left(\alpha_{k}(x)\right)$ is chosen as the best observation from the exploration stage. \par In this stage, the most unexplored region (where the number of currently observed values is less) in the search space is considered, which can be seen from the definition. Also, one thing to note is that, as the iteration progresses intuitively one can see that the uncertainty in the whole space reduces so does the radius of the exploration region.
    \item In the \textbf{exploitation stage}, multiple sub-iterations are performed near the current best-observed value to ensure that, before shifting to another point predicted by the exploration stage, there is not enough improvement in the solution for the current exploitation trust region. 
    \begin{enumerate}
        \item Inside $k$-th iteration, each sub-iteration $m$ of the exploitation stage finds $x_{m+1}$ as gradient accent step of the posterior mean in the region $\mathcal{R}_{exploit}$, which is defined by $\mathcal{R}_{exploit} =  \{x \in \mathcal{X} | d(x, x_{m}) < r_m\}$, where for  $m=0, x_{m} =  \mathop{\argmax}\limits_{x \in S} \left(f(S)\right)$.
        \item For $m=0,1,2,\cdots,(N'-1)$, the gradient accent step is defined by $x_{m+1} = \Pi_{\mathcal{R}^{t,m}_{exploit}}(x_m + \delta_m \nabla \mu_t(x_m))$. where $\Pi$ is projection operator given by, $\Pi_{\mathcal{R}^{t,m}_{exploit}}(x)=
\begin{cases}
\alpha_i & \text{if } x_i \leq \alpha_i, \\
\beta_i & \text{if } x_i \geq \beta_i, \\
x_i & \text{otherwise.}
\end{cases}
$ for $\mathcal{X}=\{x \in \mathbb{R}^n| \alpha_i \leq x_i \leq \beta_i\}$.
        \item Based on the sufficient improvement, the trust region radius is decreased or increased, given by the rule 
    $$
    r_{m+1} =
    \begin{cases}
    \gamma_{\text{inc}} \cdot r_m & \text{if } \mu_t(x_{m+1}) - \mu_t(x_m) > \eta_{\text{inc}} \cdot r_m, \\
    \gamma_{\text{dec}} \cdot r_m & \text{otherwise},
    \end{cases}
    $$
    \end{enumerate}  
    \par Here, finding the actual objective function value to calculate the actual increment is not desirable because the objective is an expensive function, so the increment is taken over the posterior mean function which tends to the actual function as the number of observed values increases. 
    \item After all the sub-iterations are done in the exploitation stage, the $x_{m}$ is predicted as the best-observed point in this stage and named $x_{**}$.
    \item Between $x_{*}$ and $x_{**}$, the point giving a higher acquisition function value is predicted as the query for the $(k+1)$-th iteration. Observed data is then modified, and based on that posterior mean and posterior variance functions are updated.
\end{itemize}
The algorithm is provided systematically in Algorithm (\ref{alg:MTRBO}).
\section{Convergence analysis}\label{Convergence analysis}
In this section, an analysis of the convergence properties of the proposed algorithm is done. Begin with establishing the conditions under which the posterior mean of the Gaussian process model converges to the true objective function $f(x)$. Then the behavior of the exploration and exploitation stages are examined, demonstrating how the algorithm effectively balances exploration and exploitation to cover the entire search space and refine the search around promising regions. Finally, using these results it is proved that the algorithm achieves global convergence, ensuring that the global optimum of the objective function is eventually identified as the number of iterations increases.
\begin{proposition}[Uniform Convergence of Posterior Mean]\label{proposition:1}

Let $\mathcal{X} \subset \mathbb{R}^d$ be a compact domain, and let $f : \mathcal{X} \to \mathbb{R}$ belong to the Reproducing Kernel Hilbert Space (RKHS) $\mathcal{H}_k$ associated with a continuous, positive-definite kernel $k$, with RKHS norm bounded as $\|f\|_{\mathcal{H}_k} \leq M$. Let $\mu_t(x)$ denote the posterior mean of a Gaussian Process trained on $t$ noise-free observations $y_i = f(x_i)$. Assume that the points $\{x_i\}_{i=1}^t \subset \mathcal{X}$ become dense in $\mathcal{X}$ as $t \to \infty$.

Then,
\[
\lim_{t \to \infty} \mu_t(x) = f(x) \quad \text{uniformly over } x \in \mathcal{X}.
\]
\end{proposition}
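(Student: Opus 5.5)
The plan rests on the standard RKHS error bound for noise-free kernel interpolation: the posterior variance $\sigma_t$ is the power function, and the interpolation error is at most $M$ times it. Uniform convergence then reduces to showing $\sup_x \sigma_t(x)\to 0$ when the design becomes dense.

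Throughout I take the prior mean $\mu\equiv 0$, or absorb it into $f$ by assuming $f-\mu\in\mathcal{H}_k$. In the noise-free case, $\mu_t$ is the minimum-norm interpolant of $f$ at $x_1,\dots,x_t$. It is therefore the orthogonal projection $P_t f$ of $f$ onto $V_t=\mathrm{span}\{k(\cdot,x_i)\}_{i\le t}$ in $\mathcal{H}_k$. This follows because $\langle f-\mu_t, k(\cdot,x_i)\rangle_{\mathcal{H}_k}=f(x_i)-\mu_t(x_i)=0$.

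\textbf{Step 1: pointwise bound.} By the reproducing property,
\[
f(x)-\mu_t(x)=\langle f-P_tf,\,k(\cdot,x)\rangle=\langle f-P_tf,\,k(\cdot,x)-P_tk(\cdot,x)\rangle .
\]
Cauchy--Schwarz then gives $|f(x)-\mu_t(x)|\le \|f\|_{\mathcal{H}_k}\,\|k(\cdot,x)-P_tk(\cdot,x)\|_{\mathcal{H}_k}$. A direct computation shows $\|k(\cdot,x)-P_tk(\cdot,x)\|^2_{\mathcal{H}_k}=k(x,x)-\mathbf{k}(x,x_{1:t})^T\mathbf{K}^{-1}\mathbf{k}(x,x_{1:t})=\sigma_t^2(x)$, using Equation~(\ref{eq:posterior variance}). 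Hence
\[
\sup_{x\in\mathcal{X}}|f(x)-\mu_t(x)|\le M\sup_{x\in\mathcal{X}}\sigma_t(x).
\]
If $\mathbf{K}$ is singular because of repeated points, I drop the duplicates; this changes neither $V_t$ nor $\mu_t$.

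\textbf{Step 2: $\sup_x\sigma_t(x)\to 0$.} This is the main step. Since $V_t\subseteq V_{t+1}$, $\sigma_t(x)$ is nonincreasing in $t$ for each $x$. Moreover, for any single design point $x_i$ with $i\le t$,
\[
\sigma_t(x)\le \|k(\cdot,x)-k(\cdot,x_i)\|_{\mathcal{H}_k}=\sqrt{k(x,x)-2k(x,x_i)+k(x_i,x_i)},
\]
because projecting onto $V_t$ is at least as good as approximating by the single element $k(\cdot,x_i)\in V_t$.

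Set $\rho(x,y)=\sqrt{k(x,x)-2k(x,y)+k(y,y)}$. This function is continuous on the compact set $\mathcal{X}\times\mathcal{X}$ and vanishes on the diagonal, so it is uniformly continuous. Given $\varepsilon>0$, choose $\delta$ so that $\|x-y\|<\delta$ implies $\rho(x,y)<\varepsilon$. By compactness, cover $\mathcal{X}$ by finitely many balls of radius $\delta/2$. Density of $\{x_i\}$ means each ball eventually contains a design point, so for $t\ge T(\varepsilon)$ every $x\in\mathcal{X}$ lies within $\delta$ of some $x_i$ with $i\le t$. Monotonicity guarantees this persists for all larger $t$, and so $\sup_x\sigma_t(x)\le\varepsilon$ for all $t\ge T(\varepsilon)$.

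The delicate point is interpreting ``become dense'' as: for every $\delta$, the fill distance $h_t=\sup_x\min_{i\le t}\|x-x_i\|$ eventually drops below $\delta$. Density of the full sequence implies this by the finite-cover argument just given, so the conclusion does not depend on which reading is taken.

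\textbf{Step 3: conclude.} Combining Steps 1 and 2 gives $\sup_x|f(x)-\mu_t(x)|\le M\varepsilon$ for $t\ge T(\varepsilon)$, which is the claimed uniform convergence. For the squared exponential kernel of Equation~(\ref{eq:3.3}), one could also invoke standard fill-distance power-function bounds to obtain rates, but the proposition as stated does not require them.
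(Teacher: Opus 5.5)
Your proof is correct and takes essentially the same route as the paper: identify the noise-free posterior mean with the minimum-norm RKHS interpolant, bound $|f(x)-\mu_t(x)|$ by $M$ times the power function (which equals $\sigma_t(x)$), and use compactness plus density of the design to force $\sup_{x\in\mathcal{X}}\sigma_t(x)\to 0$. The difference is that you prove the steps the paper only asserts with a pointer to kernel interpolation theory: the error bound via the orthogonal projection, and the vanishing of $\sup_x\sigma_t(x)$ via $\sigma_t(x)\le\rho(x,x_i)$, uniform continuity of $\rho$, and a finite cover.
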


\begin{proof}
The proof follows from RKHS interpolation theory.  
In the noise-free setting, the posterior mean $\mu_t(x)$ of the Gaussian Process with kernel $k$ trained on $\{x_i, f(x_i)\}_{i=1}^t$ coincides with the interpolant of $f$ from the RKHS $\mathcal{H}_k$ through the same points:
\[
\mu_t(x) = \sum_{i=1}^t \alpha_i k(x, x_i),
\]
where the coefficients $\boldsymbol{\alpha} = K_t^{-1} \mathbf{f}_t$ with $[K_t]_{ij} = k(x_i, x_j)$ and $\mathbf{f}_t = [f(x_1), \ldots, f(x_t)]^\top$.

Define the power function $P_t(x)$ as
\[
P_t(x)^2 := k(x, x) - k(x,x_{1:t})^\top K_t^{-1} k(x,x_{1:t}),
\]
where $k(x,x_{1:t}) := [k(x, x_1), \ldots, k(x, x_t)]^\top$.

The interpolation error is bounded in terms of the RKHS norm of $f$ and the power function:
\[
|f(x) - \mu_t(x)| \leq \|f\|_{\mathcal{H}_k} \cdot P_t(x).
\]
 
Note that, $\|f\|_{\mathcal{H}_k} \leq M$. Then,
\[
\sup_{x \in \mathcal{X}} |f(x) - \mu_t(x)| \leq M \cdot \sup_{x \in \mathcal{X}} P_t(x).
\]

Since $\mathcal{X}$ is compact, and $\{x_i\}_{i=1}^t$ becomes dense in $\mathcal{X}$ as $t \to \infty$
\[
\lim_{t \to \infty} \sup_{x \in \mathcal{X}} P_t(x) = 0.
\]

Hence,
\[
\lim_{t \to \infty} \sup_{x \in X} |f(x) - \mu_t(x)| = 0,
\]
i.e., $\mu_t(x) \to f(x)$ uniformly over $x \in \mathcal{X}$.
\end{proof}

This result ensures that the GP surrogate model accurately approximates the objective function as more data becomes available, provided the data covers the space densely. The proof follows classical kernel interpolation theory and can be found in \cite{ berlinet2004reproducing}.

\begin{proposition}[Local Convergence in the Exploitation Stage]\label{proposition:2}
At each iteration $t$, define the exploitation trust region $\mathcal{R}^{t,m}_{\text{exploit}} = \mathcal{B}(x_m,r_m)\cap\mathcal{X}$ where, $\mathcal{B}(x_m,r_m)$ is an open ball centered at the current best point $x_{\text{m}}$, with radius $r_m$ at sub-iteration $m$ and $\mathcal{X}$ is compact subset of $\mathbb{R}^n$. Within these trust regions, the algorithm performs $m$ sub-iterations of gradient ascent on the continuously differentiable posterior mean function $\mu_t(x)$, updating the trust region radius solely based on posterior mean improvement.
If the gradient ascent step is 
$$
x_{m+1} = \Pi_{\mathcal{R}^{t,m}_{exploit}}(x_m + \delta_m \nabla \mu_t(x_m)),
$$ 
where $\Pi$ is projection operator given by, $\Pi_{\mathcal{R}^{t,m}_{exploit}}(x)=
\begin{cases}
\alpha_i & \text{if } x_i \leq \alpha_i, \\
\beta_i & \text{if } x_i \geq \beta_i, \\
x_i & \text{otherwise.}
\end{cases}
$ for $\mathcal{X}=\{x \in \mathbb{R}^n| \alpha_i \leq x_i \leq \beta_i\}$.
The trust region radius update rule is defined as:
$$
r_{m+1} =
\begin{cases}
\gamma_{\text{inc}} \cdot r_m & \text{if } \mu_t(x_{m+1}) - \mu_t(x_{m}) > \eta_{\text{inc}} \cdot r_m, \\
\gamma_{\text{dec}} \cdot r_m & \text{otherwise},
\end{cases}
$$
where $\gamma_{\text{inc}} > 1$, $\gamma_{\text{dec}} \in (0,1)$ and $\eta_{\text{inc}} > 0$ is a small improvement threshold and are fixed. $\delta_m$ is the step length in gradient ascent chosen using the maximization rule, i.e, $\delta_m$ is chosen such that $\mu_t(x_m + \delta_m \nabla \mu_t(x_m))= \max_{\delta > 0}(\mu_t(x_m + \delta \nabla \mu_t(x_m)))$.

Then the sequence $\{x_m\}_{m=1}^{\infty} \subseteq \cup\mathcal{R}^{t,m}_{\text{exploit}}$ converges to a constrained stationary point (i.e, satisfies first-order optimality with respect to the feasible region. Note that, for closed and bounded search space $\mathcal{X}$, constrained stationary points can also lie in the boundary) $x^t_{**} \in \cup\mathcal{R}^{t,m}_{\text{exploit}}$.
\end{proposition}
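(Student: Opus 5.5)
The plan is to treat the exploitation stage as a projected gradient ascent method on the fixed smooth function $\mu_t$ over the compact box $\mathcal{X}$, and to argue by monotonicity, compactness and a sufficient-increase estimate.

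\textbf{Step 1: $\mu_t$ is smooth and bounded.} For fixed $t$, $\mu_t(x)=\mu(x)+\mathbf{k}(x,x_{1:t})^{T}\mathbf{K}^{-1}(f_{1:t}-\mu_{1:t})$ is a finite combination of squared exponential kernels plus the prior mean. On the compact set $\mathcal{X}$ it is therefore continuously differentiable, bounded above by some $\mu^{\max}$, and its gradient is Lipschitz with some constant $L$. Only these three properties of $\mu_t$ will be used.

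\textbf{Step 2: the values $\mu_t(x_m)$ increase and converge.} The exact line-search rule gives $\mu_t(x_m+\delta_m\nabla\mu_t(x_m))\ge \mu_t(x_m+\delta\nabla\mu_t(x_m))$ for every $\delta>0$. The projection onto the box intersected with the ball is a nonexpansive map onto a convex set. I will use the standard projected-gradient inequality
$$\langle \nabla\mu_t(x_m),\,x_{m+1}-x_m\rangle \ \ge\ \tfrac{1}{\delta_m}\|x_{m+1}-x_m\|^2,$$
together with the descent lemma for $L$-smooth functions, to obtain
$$\mu_t(x_{m+1})-\mu_t(x_m)\ \ge\ c\,\|G_m\|^2 .$$
Here $G_m=(x_{m+1}-x_m)/\bar\delta$ is the projected gradient mapping with an effective step $\bar\delta\le 1/L$, and $c>0$ does not depend on $m$. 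Since $\mu_t$ is bounded above, $\mu_t(x_m)$ is monotone and convergent, and $\sum_m\|G_m\|^2<\infty$.

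\textbf{Step 3: passage to a stationary limit.} The sequence lies in the compact set $\mathcal{X}$, so it has limit points. At any limit point $\bar x$, continuity of the projected gradient map gives $\Pi_{\mathcal{X}}(\bar x+\bar\delta\nabla\mu_t(\bar x))=\bar x$. This fixed-point condition is exactly the first-order condition $\langle\nabla\mu_t(\bar x),y-\bar x\rangle\le 0$ for all $y\in\mathcal{X}$, and it covers stationary points on the boundary, as the statement notes. The trust-region radius matters here. The increase of $\mu_t$ is bounded, so the test $\mu_t(x_{m+1})-\mu_t(x_m)>\eta_{\text{inc}}r_m$ can succeed only finitely often once $\mu_t(x_m)$ has converged. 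Hence $r_m\to 0$ geometrically, and $\|x_{m+1}-x_m\|<r_m$ is summable. The iterates therefore form a Cauchy sequence and converge to a single point $x^t_{**}$, which lies in the closure of $\cup\mathcal{R}^{t,m}_{\text{exploit}}$.

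\textbf{Main obstacle.} The hard step is Step 3. Because the radii shrink, the ball constraint may eventually bind. The limit is then stationary for the shrinking trust region but not necessarily for $\mathcal{X}$, since $G_m$ is measured relative to $\mathcal{B}(x_m,r_m)\cap\mathcal{X}$. I would first try to show that whenever $\|\nabla_{\mathcal{X}}\mu_t(x_m)\|\ge\epsilon$ and $r_m$ is small, the step achieves an increase of order $\epsilon r_m$ by a first-order Taylor argument. Once $\eta_{\text{inc}}<\epsilon/2$, this forces a radius increase. That contradicts $r_m\to0$ unless the projected gradient tends to zero along the sequence, and so yields stationarity with respect to $\mathcal{X}$.

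The argument needs a caveat: it requires $\eta_{\text{inc}}$ to be small relative to the gradient scale, which matches the statement's description of $\eta_{\text{inc}}$ as ``a small improvement threshold''. If a uniform $\epsilon$ is not available, I would settle for showing that the limit is stationary, possibly only in the sense that $\liminf_m\|\nabla_{\mathcal{X}}\mu_t(x_m)\|=0$.
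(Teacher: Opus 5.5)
Your ``main obstacle'' paragraph finds the right difficulty, but the proposed fix does not remove it. With $\eta_{\text{inc}}>0$ fixed, it cannot be removed. The Taylor argument only rules out $\|\nabla_{\mathcal X}\mu_t(x_m)\|\ge\epsilon$ persisting when $\epsilon$ exceeds a constant multiple of $\eta_{\text{inc}}$. The phrase ``once $\eta_{\text{inc}}<\epsilon/2$'' restricts $\epsilon$, not the fixed $\eta_{\text{inc}}$. So you get $\liminf_m\|\nabla_{\mathcal X}\mu_t(x_m)\|\le C\eta_{\text{inc}}$, not $0$, and neither stationarity nor your fallback $\liminf=0$ follows. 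Your $G_m\to0$ from Step 2 does not help either: it is measured relative to $\mathcal B(x_m,r_m)\cap\mathcal X$ and merely reflects $r_m\to0$.

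In fact, under your reading of $\Pi$ as projection onto the closed trust region, the statement is false. Take $n=1$, $\mathcal X=[0,10]$, $x_0=0$, $r_0=\tfrac12$, $\gamma_{\text{dec}}=\tfrac12$. Let $\mu_t$ be smooth with unique global maximizer $9$, increasing on $[0,9]$, and with $0<\mu_t'<\eta_{\text{inc}}$ on $[0,2]$. From each $x_m\in[0,1)$ the exact line search lands at $9$, and the projection returns $x_m+r_m$. The increase is then below $\eta_{\text{inc}}r_m$, so $r_m=2^{-m-1}$ and $x_m=1-2^{-m}\to1$. This limit is an interior point with $\mu_t'(1)>0$.

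Your own observation that $\sum_m r_m<\infty$ is exactly the mechanism: it pins the limit within $\sum_m r_m$ of the start, whether or not that point is stationary. The paper's proof does not supply the missing ingredient. It asserts strict increase, then passes from convergence of $\mu_t(x_m)$ directly to convergence of $x_m$ to a stationary point, which the example contradicts. Under these hypotheses the most one can hope for is approximate stationarity, $\|\nabla_{\mathcal X}\mu_t(x^t_{**})\|\le C\eta_{\text{inc}}$. Exact stationarity needs a different acceptance rule, e.g.\ a ratio test comparing the actual increase with the linearized increase $\langle\nabla\mu_t(x_m),x_{m+1}-x_m\rangle$.

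Step 2 has a separate gap, and everything downstream (monotonicity, $r_m\to0$) depends on it. The projection inequality and the descent lemma give only
\[
\mu_t(x_{m+1})-\mu_t(x_m)\ \ge\ \Bigl(\frac{1}{\delta_m}-\frac{L}{2}\Bigr)\|x_{m+1}-x_m\|^2 .
\]
This is vacuous for $\delta_m>2/L$, and exact line search along the \emph{unprojected} ray produces arbitrarily large $\delta_m$; in the example, $\delta_m=(9-x_m)/\mu_t'(x_m)>8/\eta_{\text{inc}}$. The ``effective step $\bar\delta\le 1/L$'' is not a step the algorithm takes, so the bound $c\|G_m\|^2$ is unjustified. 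With the ball constraint you get an increase only when $r_m$ is small relative to $\|\nabla\mu_t(x_m)\|/L$. Because no step is ever rejected, $\mu_t$ can decrease at large radii: the projected point can land in a valley between $x_m$ and a distant peak.

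Two smaller points. First, the claim that the radius test succeeds only finitely often does not follow, since successes can recur while the radii still shrink geometrically. What monotonicity does give is $r_m\to0$ and $\sum_m r_m<\infty$, by bounding each run of failures with a geometric series anchored at the preceding success. Second, the statement's $\Pi$ is literally the coordinatewise clamp onto $\mathcal X$. Under that reading $\|x_{m+1}-x_m\|\le r_m$ fails and the radius never affects the iterates. Your Cauchy argument therefore rests on reinterpreting $\Pi$ as projection onto $\overline{\mathcal B(x_m,r_m)}\cap\mathcal X$.
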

\begin{proof}

Let $x_m$ denote the $m$-th point generated during the exploitation sub-iterations using a gradient ascent step on the posterior mean $\mu_t(x)$ within the trust region $\mathcal{R}^{t,m}_{\text{exploit}}$. By construction,
$$
x_{m+1} = \Pi_{\mathcal{R}^{t,m}_{exploit}}(x_m + \delta_m \nabla \mu_t(x_m)),
$$ 
where $\delta_m > 0$ is the step length. Then
$$
\mu_t(x_{m+1}) > \mu_t(x_m), \quad \text{unless } \nabla \mu_t(x_m) = 0 \text{ or } x_{m} = \Pi_{\mathcal{R}^{t,m}_{exploit}}(x_m + \delta_m \nabla \mu_t(x_m)).
$$
Hence, the sequence $\{\mu_t(x_m)\}_{m=1}^{\infty}$ is strictly increasing unless a constrained stationary point (either interior stationary point or boundary point) is reached.

Since the exploitation trust region $\mathcal{R}^{t,m}_{\text{exploit}}$ is always centered around $x^t_{\text{best}} \in \mathcal{X}$ and $\mathcal{X}$ is compact, all sub-iterates $x_m$ lie in a compact subset of $\mathbb{R}^d$. Thus, the sequence $\{x_m\}$ is bounded, and by the Bolzano-Weierstrass theorem, it has at least one accumulation point.

The radius $r_m$ evolves according to:
$$
r_{m+1} =
\begin{cases}
\gamma_{\text{inc}} \cdot r_m & \text{ if } \mu_t(x_{m+1}) - \mu_t(x_{m}) > \eta_{\text{inc}} \cdot r_m, \\
\gamma_{\text{dec}} \cdot r_m & \text{otherwise},
\end{cases}
$$
with $\gamma_{\text{inc}} > 1$, $\gamma_{\text{dec}} \in (0,1)$. When $\mu_t(x_m)$ increases significantly, the region expands, allowing broader search. If not, the region shrinks, refining the search. Because the mean function is smooth and $\mathcal{X}$ is compact, this adaptive adjustment ensures convergence to a local optimum.

Since $\mu_t(x)$ is differentiable and hence continuous and $\{\mu_t(x_m)\}$ is monotonically increasing and bounded above (as $\mu_t$ is continuous over compact $\mathcal{X}$) converging to $\mu_t(x_{**}^t)$, then the sequence $\{x_m\}$ converges to the point $x^t_{**}$(note that, $\mu_t$ is locally strictly increasing at $x_{**}^t$).
Therefore, $x_{**}^t$ is a constrained stationary point of the posterior mean function. Thus, under the stated trust region update rule, the sequence $\{x_m\}$ converges to a constrained stationary point $x^t_{**}$ of the posterior mean $\mu_t(x)$ within the union of exploitation regions.
\end{proof}
For more details about the convergence analysis of projected gradient, please refer to \citep{bertsekas1997nonlinear}

\begin{proposition}[Exploration]\label{proposition:3}
Let $\mathcal{X}\subset \mathbb{R}^D$ be a compact hyper-rectangle.  
Define the set of evaluated points after $t$ iterations as 
$S_t=\{x_1,x_2,\dots,x_t\}\subseteq \mathcal{X}$, and let $B$ denote the set of extreme points (vertices) of $\mathcal{X}$.  
Set $A_t = S_t \cup B$.

For each $x_i \in A_t$, define the max-min distance 
$d_t =\mathop{\max}\limits_{x_i\in A_t} \{\mathop{\min}\limits_{x_j \in A_t, j\neq i} \{d(x_i, x_j)\}\}$.
Then select, $ x_p^t = \mathop{\arg max}\limits_{x_i\in A_t} \{\mathop{\min}\limits_{x_j \in A_t, j\neq i} \{d(x_i, x_j)\}\} \text{ and } x_q^t= \mathop{\arg min}\limits_{x_j \in A_t, j\neq p} d(x_p^t, x_j).$
Define the exploration trust region at iteration $t$ as
$$\mathcal{R}_{\mathrm{explore}}^t 
= \Big\{ x \in \mathcal{X} \;\Big|\; d(x,x_{\mathrm{explore}}^t) < r_{\mathrm{explore}}^t \Big\},$$
where the center and radius are
$$
x_{\mathrm{explore}}^t \;=\; \tfrac{1}{2}(x_p^t + x_q^t), ~
r_{\mathrm{explore}}^t \;=\; \tfrac{d_t}{2}.
$$

As $t \to \infty$, the iterative construction ensures that the union of exploration trust regions 
$\bigcup_{t=1}^\infty \mathcal{R}_{\mathrm{explore}}^t$ becomes dense in $\mathcal{X}$, thereby reducing posterior uncertainty uniformly and preventing omission of any potential global optimum $x_* \in \mathcal{X}$.
\end{proposition}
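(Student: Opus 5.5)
The plan is to argue by contradiction through a geometric potential. Let $h_t=\sup_{x\in\mathcal{X}}\min_{a\in A_t}d(x,a)$ be the fill distance of $A_t$. Density of $\bigcup_t\mathcal{R}^t_{\mathrm{explore}}$, and the uniform decay of posterior uncertainty asserted in the statement, both follow once $h_t\to 0$. For the uncertainty part, Proposition \ref{proposition:1} and the power-function bound $\sup_x P_t(x)\to 0$ for dense designs do the rest. So the work is to show that the max-min quantity $d_t$ is driven to zero and that this forces $h_t\to 0$.

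\textbf{Step 1 (monotonicity of $d_t$).} Since $A_{t+1}=A_t\cup\{x_{t+1}\}$, every nearest-neighbour distance in $A_{t+1}$ is at most the corresponding one in $A_t$. The new point's own nearest-neighbour distance can exceed $d_t$, but I would bound it by $h_t$. So $\{d_t\}$ is bounded, and it is nonincreasing whenever $x_{t+1}$ lands inside an existing gap.

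\textbf{Step 2 (effect of an exploration query).} When the algorithm selects $x_{t+1}=x_*^t\in\mathcal{R}^t_{\mathrm{explore}}$, the new point lies within $d_t/2$ of the midpoint of $x_p^t$ and $x_q^t$. In particular $d(x_{t+1},x_p^t)<d_t$, so the nearest-neighbour distance of $x_p^t$ strictly drops. Only finitely many points of $A_t$ attain the value $d_t$. I would therefore show by a counting argument that after finitely many exploration steps $d_t$ decreases by a fixed fraction, for instance to at most $\tfrac{3}{4}d_t$. Iterating gives $d_t\to 0$, provided $x_*^t$ is selected infinitely often. I would impose that as an explicit hypothesis. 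One option is to assume it outright. Another is to note that if $x_*^t$ were never selected after some time, then all samples would accumulate in the exploitation regions. The posterior variance, and hence $\alpha_t$ under EI or UCB, would stay bounded away from zero in $\mathcal{R}^t_{\mathrm{explore}}$ and tend to zero near the accumulation set. That would make the comparison in the $\argmax(\alpha_n(x_*),\alpha_n(x_{**}))$ line eventually favour $x_*$, a contradiction.

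\textbf{Step 3 (from $d_t\to 0$ to density), the main obstacle.} A small max-min nearest-neighbour distance does not by itself imply a small fill distance. A large empty ball $B(z,\varepsilon)$ can sit among points that all have close neighbours on its rim, and this is the genuinely delicate point. I would first try to exploit the hyper-rectangle structure and the vertex set $B$. If $B(z,\varepsilon)\cap\mathcal{X}$ is empty of samples, consider the sample points nearest to $z$. I would argue that along a segment from $z$ to a vertex, the exploration centres $x^t_{\mathrm{explore}}$ eventually enter a neighbourhood of the hole: they must migrate to wherever the largest nearest-neighbour gaps remain, and new points inside the hole are the only way to keep shrinking gaps adjacent to it. If this cannot be made rigorous, the fallback is to strengthen the hypothesis to what the algorithm plausibly delivers, namely that the selected points satisfy $h_t\to 0$. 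Equivalently, one could replace the max-min rule in the analysis by the largest-empty-ball centre, for which density follows directly from compactness: a finite $\varepsilon/2$-cover of $\mathcal{X}$ is hit in every cell after finitely many exploration steps. Uniform decay of $\sup_x\sigma_t(x)$ then follows from the earlier power-function argument.
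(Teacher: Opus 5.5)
Your proposal has genuine gaps at Step 3 and Step 2, and in Step 3 the main route is false rather than merely unproven.

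\textbf{Step 3.} You are right on two points. First, $d_t$ need not be monotone, since an exploitation point landing in a hole can raise it. Second, $d_t\to0$ does not control the fill distance. The paper's proof overlooks both: it asserts monotonicity, obtains $d_t\to0$ from an unjustified ``strictly decreasing subsequence with infimum zero'', and then jumps to density through the inclusion $A_T\subseteq\bigcup_t\mathcal{R}^t_{\mathrm{explore}}$. Your proposal does not close this gap either.

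Every point of the open ball $\mathcal{R}^t_{\mathrm{explore}}$ is closer than $d_t$ to $x_p^t$. So the ball contains no point of $A_t$ (in particular no vertex, which already refutes the paper's inclusion), and it lies within $d_t$ of $x_p^t$. In other words, the max-min rule only ever sees the gap between a point and its own nearest neighbour.

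Take $D=1$, $\mathcal{X}=[0,1]$, $B=\{0,1\}$, $S=\{0.01,0.5,0.51,0.99\}$. Each point's nearest neighbour lies in its own cluster, so $x_p^t$ and $x_q^t$ always belong to the same cluster. By induction, every exploration region and every exploration query stays in $(0,0.01)\cup(0.5,0.51)\cup(0.99,1)$, provided the exploitation points also stay in the clusters. Nothing in the statement rules this out, and analogous clustered designs exist in every dimension. Then $d_t\to0$ is possible while neither $\bigcup_t\mathcal{R}^t_{\mathrm{explore}}$ nor $S_t$ is dense. So no argument that exploration centres migrate towards holes can succeed.

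Your fallbacks change the statement instead of proving it. Assuming $h_t\to0$ is essentially assuming the conclusion. Even then, density of the union would need the \emph{exploration} queries to be dense, since exploitation points need not lie in any region. The largest-empty-ball rule is a different algorithm.

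\textbf{Step 2.} This step has an independent gap. The strict drop $d(x_{t+1},x_p^t)<d_t$ can be arbitrarily small, because $x_*^t$ is only a maximiser of $\alpha_n$ over the open ball and may lie next to $x_q^t$ on its rim. Take $A=\{0,0.9,1\}\subset[0,1]$ and queries $0.9-\eta_1$, $0.9-\eta_1-\eta_2,\dots$ with $\sum_k\eta_k<0.4$. All are admissible, $x_p^t=0$ throughout, the $k$-th region is $(0,\,0.9-\sum_{i\le k}\eta_i)$, and $d_t$ decreases strictly to a limit above $0.5$. So no counting argument gives a fixed-fraction decrease without an extra hypothesis such as $\min_{a\in A_t}d(x_*^t,a)\ge c\,r^t_{\mathrm{explore}}$ for a fixed $c>0$.

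The same example defeats your largest-empty-ball fallback, because there the largest empty ball is the same interval. Compactness only helps if each query stays a fixed fraction of the radius away from the rim, for instance at the centre.

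Your contradiction sketch for infinitely many exploration steps is also only heuristic. For the SE kernel, the posterior variance away from the samples need not stay bounded below. In $D=1$, infinitely many distinct samples accumulating at one point force $\sigma_t\to0$ everywhere, because the RKHS consists of analytic functions.

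In short, the proposition needs extra hypotheses on the initial design, on where queries fall inside $\mathcal{R}^t_{\mathrm{explore}}$, and on the exploitation points. Neither your argument nor the paper's supplies them.
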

\begin{proof}
    After each iterations, a new point is added to the observed set, so, from the construction of $d_m$, the sequence $\{d_m\}$ is monotonically decreasing. It will become eventually constant only if the sequence of selected points $\{x_m\}$ converges to the global optimum, otherwise there exists a strictly decreasing subsequence $\{{d_m}_{k}\}_{k=1}^{\infty}$ of $\{d_m\}$ with zero as greatest lower bound. Hence, the sequence $\{d_m\}$ converges to zero. Therefore $\forall x \in \mathcal{X}$, and $\forall \epsilon>0, \exists ~ T \in \mathbb{N}$ such that $d(x,y)<\epsilon$ for some $y \in A_T = S_T \cup B = \{x_1, x_2, \cdots, x_T\} \cup B \subseteq \bigcup_{t=1}^\infty \mathcal{R}_{\mathrm{explore}}^t$. Hence, $\bigcup_{t=1}^\infty \mathcal{R}_{\mathrm{explore}}^t$ is dense in $\mathcal{X}$.
\end{proof}

\begin{theorem}[Global Convergence of the Algorithm]\label{theorem:1}
    Let, $f(x) : \mathcal{X} \to \mathbb{R}$ is an expensive black-box objective function, which is assumed to follow a Gaussian process with posterior mean $\mu_t(x)$ and posterior variance $\sigma_t^2(x)$. Then as $t \to \infty$ and $m \to \infty$, the proposed algorithm converges to a global optimum $x^{*}$ of the objective function. 
\end{theorem}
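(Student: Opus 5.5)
The plan is to combine the three propositions: Proposition~\ref{proposition:3} supplies density of the samples, Proposition~\ref{proposition:1} turns density into uniform convergence of the surrogate, and Proposition~\ref{proposition:2} controls what the exploitation stage returns in the limit $m\to\infty$. Throughout we work under the standing hypotheses of Proposition~\ref{proposition:1}: $\mathcal{X}$ is a compact hyper-rectangle, $f\in\mathcal{H}_k$ with $\|f\|_{\mathcal{H}_k}\le M$, and the kernel is continuous and positive definite (the squared exponential kernel qualifies). Observations are noise-free. Write $f^{*}=\max_{x\in\mathcal{X}}f(x)$, which exists by continuity of $f$ and compactness of $\mathcal{X}$. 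Let $f^{best}_t=\max_{i\le t}f(x_i)$ be the quantity returned by Algorithm~\ref{alg:MTRBO}. The target claim is $f^{best}_t\to f^{*}$. Since $\{f^{best}_t\}$ is nondecreasing and bounded above by $f^{*}$, it has a limit; the work is to show that this limit equals $f^{*}$.

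\textbf{Step 1 (density of queries).} I would show that the set $S_t$ of evaluated points becomes dense in $\mathcal{X}$. Proposition~\ref{proposition:3} gives $d_t\to 0$, where $d_t$ is the max-min spacing of $A_t=S_t\cup B$. Fix $x\in\mathcal{X}$ and $\epsilon>0$. If no point of $A_t$ lay within $\epsilon$ of $x$, the fill distance of $A_t$ would be at least $\epsilon$. I would then argue, using the hyper-rectangle geometry and the vertices in $B$, that the covering radius of $A_t$ is controlled by $d_t$, so that $d_t\to 0$ forces the fill distance to $0$. This is precisely the step I expect to be the main obstacle. The max-min separation $d_t$ does not obviously dominate the fill distance, and the algorithm only queries $\argmax(\alpha_n(x_*),\alpha_n(x_{**}))$, so the exploration point is not always evaluated.

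To close this gap, I would first establish that $x_*$ is chosen infinitely often and with a definite effect. Suppose the exploitation point were chosen forever. Those queries would accumulate near stationary points of the surrogate, so the posterior variance there would tend to $0$. Meanwhile $\sigma_t$ stays bounded below on the largest empty ball, whose radius is comparable to the covering radius. For EI (or UCB with $\beta>0$), this gives $\alpha_t(x_*)>\alpha_t(x_{**})$ eventually, contradicting the assumption. Each time $x_*$ is chosen, a point is placed inside the largest empty region $\mathcal{R}^t_{explore}$. A standard packing/volume argument then shows the covering radius of $S_t$ tends to $0$. If this argument fails, I would fall back on adding the density of $\{x_i\}$ as an explicit assumption, which is how Proposition~\ref{proposition:1} is already phrased.

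\textbf{Step 2 (surrogate convergence).} With density established, Proposition~\ref{proposition:1} yields $\sup_{x}|\mu_t(x)-f(x)|\to 0$. The power-function bound also gives $\sup_x\sigma_t(x)\to 0$, since $\sigma_t=P_t$.

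\textbf{Step 3 (conclusion).} Let $x^{*}$ be a global maximizer and fix $\epsilon>0$. By continuity of $f$, there is $\rho>0$ with $f(x)>f^{*}-\epsilon$ on $\mathcal{B}(x^{*},\rho)\cap\mathcal{X}$. By Step 1, some evaluated $x_i$ eventually lies in this ball, so $f^{best}_t>f^{*}-\epsilon$ for all large $t$. Hence $f^{best}_t\to f^{*}$.

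For the statement about $m\to\infty$, I would add a refinement. Proposition~\ref{proposition:2} gives that $x^t_{**}$ is a constrained stationary point of $\mu_t$ reached by monotone ascent from $x^t_{best}$, so $\mu_t(x^t_{**})\ge\mu_t(x^t_{best})$. Combining this with the uniform bound from Step 2 gives $f(x^t_{**})\ge f^{best}_t-2\sup|\mu_t-f|$. Therefore the exploitation iterates also attain values converging to $f^{*}$. Any accumulation point of $x^t_{best}$ is then a global maximizer by continuity of $f$, which is the sense in which the algorithm converges to a global optimum $x^{*}$.
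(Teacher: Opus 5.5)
You follow the paper's own chain, with Propositions~2 and~3 feeding into Proposition~1, and you have correctly found its weak link. Everything hinges on Step~1: once the evaluated points are dense, your Step~3 uses only continuity of $f$, and neither Step~2 nor the $m\to\infty$ refinement is needed to get $f^{best}_t\to f^{*}$. The paper passes over this point. It reads Proposition~3, which concerns the union of exploration \emph{regions}, as supplying the density hypothesis of Proposition~1.

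Your repair does not close the gap. The false step is ``each time $x_*$ is chosen, a point is placed inside the largest empty region $\mathcal{R}^t_{\mathrm{explore}}$''. That ball is empty, but its radius is $d_t/2$, where $d_t$ is the largest nearest-neighbour distance in $A_t$. For the fill distance $h_t$ one only has $d_t\le 2h_t$, not the reverse. Here is a counterexample:
\begin{itemize}
\item Take $\mathcal{X}=[0,1]$, $B=\{0,1\}$ and $S_0=\{0.01,0.99\}$.
\item Every point of $A_t$ has a neighbour within $0.01$. So $x^t_p$ and $x^t_q$ lie in the same cluster, and $\mathcal{R}^t_{\mathrm{explore}}$ is the open interval between them.
\item By induction, as long as queries come from the exploration stage, every exploration region lies in $(0,0.01)\cup(0.99,1)$.
\item Meanwhile $h_t\ge 0.49$ for all $t$.
\end{itemize}
So no packing or volume argument on the exploration balls can drive the covering radius to $0$. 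The same example shows that the inference ``$d_t\to 0$ implies density'' in the proof of Proposition~3, which you take as input, is invalid.

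The other half of your repair also fails for the squared exponential kernel the paper uses. The claim ``$\sigma_t$ stays bounded below on an empty ball'' is the \emph{no-empty-ball} property. It holds for Mat\'ern-type kernels, but not for the squared exponential kernel. On $[0,1]$ the argument is as follows:
\begin{itemize}
\item Any infinite sequence of distinct evaluation points accumulates at some $x_0$.
\item Divided differences then place every $\partial^j k(\cdot,x_0)$ in the closed span of $\{k(\cdot,x_i)\}$.
\item Elements of $\mathcal{H}_k$ are analytic, so this closed span is all of $\mathcal{H}_k$.
\item Hence $\sigma_t\to 0$ pointwise, and uniformly by Dini's theorem. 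This happens even though, in the example above, the gap $(0.01,0.99)$ is never sampled.
\end{itemize}
Even if you had a variance lower bound on some empty ball, $x_*$ is maximized only over $\mathcal{R}^t_{\mathrm{explore}}$, not over the largest empty ball. So the bound would not yield $\alpha_t(x_*)>\alpha_t(x_{**})$.

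Your fallback of assuming that $\{x_i\}$ is dense does not prove the stated theorem. It replaces it with an essentially immediate statement. An actual proof would most likely require stronger hypotheses or a modified algorithm. One option is to evaluate the centre of a largest empty ball infinitely often, after which a packing argument does give density. Another is to use a kernel with the no-empty-ball property together with an argument that the exploration point keeps being selected.
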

\begin{proof}
    From propositions (\ref{proposition:2}) and  (\ref{proposition:3}), it follows that proposed algorithm reached to local optima of the posterior mean function as number of sub-iteration for exploitation stage $m \to \infty$ and the search space is extensively explored as number of iteration of the algorithm $t \to \infty$. So, global convergence of $\mu_t(x)$ is established, which itself converges to the actual objective function $f(x)$ as $t \to \infty$ (from proposition (\ref{proposition:1})). Hence, the proposed algorithm converges globally.
\end{proof}
Throughout this work, it is assumed that the objective is an expensive black-box function but deterministic, and there is no noise in the observed functional values. In case of noisy observations, the convergence can be proved using the concentration inequality as discussed in \citep{chowdhury2017kernelized}. The global convergence for categorical and mixed search spaces is shown in \citep{wan2021think}. \\
\noindent\textit{Remark.} Theorem~1 establishes only the theoretical soundness of the proposed method, ensuring non-divergence, and is not intended as validation of its superiority. The main evidence supporting effectiveness of the proposed method tackling the issues of standard Bayesian optimization in high dimensional setting is provided by the empirical results in the following section.




\begin{table}[htbp]
\caption{Summary of All Experiments Conducted}
\label{table:experiment_summary}
\begin{tabular}{|p{1.6cm}|p{3cm}|p{2.5cm}|p{6.5cm}|}
\hline
\textbf{Experiment Type} & \textbf{Description} & \textbf{Test Cases / Datasets} & \textbf{Evaluation Metrics / Notes} \\
\hline

\textbf{Main Comparison} & Comparison of proposed method against baselines over 30 trials using normalized performance. See Tables (\ref{table:comparison_2d},\ref{table:comparison_20d},\ref{table:comparison_50d},\ref{table:comparison_100d},\ref{table:comparison_500d}) & 15 test functions × 5 dimensions (2, 20, 50, 100, 500) & Normalized scores: $\frac{f - f_{\min}}{f_{\max} - f_{\min}} \times 100$, 
$f_{\min}$ and $f_{\max}$ are the minimum and maximum functional value achieved by all the methods over all the independent trial runs. EI acquisition function is used as acquisition function. Hyperparameter values: $N=100, N'=15, r_0=0.5, n_0 = 10, \gamma_{\text{inc}}=2, \gamma_{\text{dec}}= 1/2, \eta_{\text{inc}}= 0.01, \delta =0.1$. Length scale($l$) of GP is determined via MLE\\
\hline

\textbf{Ablation Study} & Evaluate contributions of components of the proposed method. See Table (\ref{table:ablation_study}) & 2 test functions × 2 dimensions (e.g., Ackley, Eggholder in 2D, 20D) & Best / Mean / Worst values, 30 independent trials per configuration. Hyperparameter values are same as main comparison \\
\hline

\textbf{Statistical Significance} & Wilcoxon Rank-Sum test between proposed and baseline methods. See Tables (\ref{table:comparison_2d},\ref{table:comparison_20d},\ref{table:comparison_50d},\ref{table:comparison_100d},\ref{table:comparison_500d}) & Same as main comparison. Results are observed for  30 independent trials for each function and dimension.  & Symbols $+, -$ and $\approx$ indicate compared method is significantly worse(larger objective values), significantly better(smaller values), and no significant difference relative to MTRBO, respectively, at the $5\%$ significance level $(p<0.05)$; hyperparameters match the main comparison.\\
\hline

\textbf{MuJoCo HalfCheetah} & Performance of optimization methods on real-world RL benchmark problem. See Figure (\ref{image:mujoco}) & OpenAI Gym’s HalfCheetah-v2 & Cumulative reward  \\
\hline

\textbf{Portfolio Optimization} & Application to real-world stock portfolio allocation. See Table (\ref{table:portfolio})& Historical stock data from Indian National Stock Exchange and the New York Stock Exchange& Objective: maximize weighted sum of return and risk. Evaluation metric: Sharpe ratio, weight $\lambda=0.5$ \\
\hline

\textbf{Sensitivity Analysis} & Impact of hyperparameters on performance. See Table \ref{table:sensitivity_analysis} & Ackley function (2D) & Objective values for different set of hyperparameter values. $\gamma_{\text{inc}}=2, \gamma_{\text{dec}}= 1/2, \eta_{\text{inc}}= 0.01, \delta =0.1$ are fixed for all the analysis. Length scale in kernel function is treated as hyperparameter. \\
\hline

\textbf{Contour + Selected points} & Visualization of selected points in the search space overlaid on contour plots. See Figure (\ref{image:contour_1},\ref{image:contour_2}) & Six 2D functions & Points selected during exploration vs exploitation for an arbitrary trial run and 3d visualization of the objective function \\
\hline

\textbf{Convergence Curves} & Track progress of each method per iteration to show optimization speed. See Figure (\ref{image:progress}) & Ackley function (2D) & Mean best value vs iteration, together with best and worst values over 30 trials. The hyperparameters are same as the main comparison  \\
\hline

\end{tabular}
\end{table}

\section{Experiments and results}\label{Experiments and results}
The primary objective of the experiments is to evaluate the efficacy and performance of the proposed MTRBO algorithm in solving high-dimensional problems. Specifically, the aim is to assess the algorithm's ability to locate optimal or near-optimal solutions efficiently across various benchmark problems, including non-convex and non-differentiable functions. The experimental results will also provide insights into the scalability and robustness of MTRBO when applied to complex, expensive black-box functions. The experiment is multi-fold, first, the performance of the method is analyzed using synthetic test functions of different dimensions along with sensitivity analysis to hyper-parameters used in the method, ablation study to get the insights about the effect of components like exploration and exploitation in methods performance, and then applying the method to real world problems like MuJoCo and portfolio optimization to verify its acceptability in real-world applications. Also statistical significance is tested for the proposed method against baseline methods. The complete details about all the conducted experiments are presented in Table \ref{table:experiment_summary}. All the experiments are done in a PC with an Intel(R) Core(TM) i5-6500 processor, and 16 GB RAM running on Windows 11. 
\subsection{Test functions and algorithms compared}
17 synthetic test functions, each with different dimensions(2, 20, 50, 100, 500), have been used from a Python library called "OptimizationTestFunctions" \citep{OptimizationTestFunctions} for testing. Expressions for the objective functions that are optimized are given in \ref{TF}. The problem MuJoCo HalfCheetah-v2 from OpenAI's Gym library is used for real-world application of the proposed method, together with application to portfolio optimization, where historical data about stock prices is collected from the "yfinance" Python library.
\subsection{Experiment on test functions}
 As Bayesian optimization is designed to work well for expensive functions, the low sample budget is desirable, i.e, with very few function evaluations, predicting the actual objective function as accurately as possible. In this work, the sample budget is set as 100 for all the comparisons, and later sensitivity analysis is done to check its effect on the proposed method. Therefore, after 100 function evaluations for each test function with different dimensions (2, 20, 50, 100, 500), the objective function value of the proposed method, multiple trust-region Bayesian optimization (MTRBO), together with efficient global algorithm (EGO), trust-region implementation in kriging-based optimization with expected improvement (TRIKE), trust-region Bayesian optimization (TURBO), trust-region framework for efficient global optimization (TREGO), trust region based local Bayesian optimization (TRLBO), are observed and normalized in the region of minimum and maximum value observed by each methods over all independent trials. For all the experiments, the hyperparameters for the baseline methods are: For EGP-TS, number of random features(D)=5, number of workers (K)=2, number of GP ensembles M=2, number of initial points($n_0$)=10, number of iterations(N)=100. For TRLBO, initial trust region radius($r_0$)=0.5, UCB (acquisition function used in TRLBO) coefficient($\beta=0.5$), number of initial points ($n_0$), number of iteration (N)=100 for all the other methods the hyperparameters required are same as the proposed method discussed in main comparison of Table \ref{table:experiment_summary}. Different random samples are taken for all methods in all the experiments conducted, and for testing statistical significance, the Wilcoxon rank-sum test is used. Another important thing to note is that, for the proposed method, the Expected Improvement(EI) acquisition function is used in all the experiments and is optimized using the L-BFGS-B method of the SciPy Python library. Though one can use other acquisition functions like probability of improvement (PI), upper confidence bound (UCB) or more recent family of acquisition function like LogEI(\citep{ament2023unexpected}).
\begin{table}[h]
\centering
\caption{Ablation study results on Ackley and Eggholder functions.}
\label{table:ablation_study}
\begin{tabular}{|c|c|c|c|c|c|}
\hline
Function & Dimension & Stage Used & Best Value & Worst Value & Mean Value \\
\hline
\multirow{3}{*}{Eggholder} 
 & \multirow{3}{*}{2} 
 & Only Exploitation & -971.81 & -747.21 & -862.92 \\
 &  & Only Exploration  & -1056.86 & -816.24 & -918.74 \\
 &  & Both Enabled      & -1056.81 & -894.58 & \textbf{-979.57} \\
\hline
\multirow{3}{*}{Ackley} 
 & \multirow{3}{*}{2} 
 & Only Exploitation & 0.51 & 6.82 & 4.18 \\
 &  & Only Exploration  & 1.52 & 4.40 & 3.51 \\
 &  & Both Enabled      & 0.93 & 4.62 & \textbf{2.21} \\
\hline
\multirow{3}{*}{Eggholder} 
 & \multirow{3}{*}{20} 
 & Only Exploitation & -4006.50 & -3006.72 & -3647.63 \\
 &  & Only Exploration  & -8101.87 & -4938.23 & -6545.93 \\
 &  & Both Enabled      & -8287.67 & -5957.71 & \textbf{-6933.54} \\
\hline
\multirow{3}{*}{Ackley} 
 & \multirow{3}{*}{20} 
 & Only Exploitation & 62.53 & 87.41 & 77.85 \\
 &  & Only Exploration  & 10.80 & 70.46 & 45.14 \\
 &  & Both Enabled      & -14.58 & 22.69 & \textbf{10.80} \\
\hline
\end{tabular}
\end{table}


\begin{table}[]
\caption{Predicted mean optimal value over 30 trials for each method after 100 function evaluations for $2$-dimensional search space.}
\label{table:comparison_2d}
\begin{tabular}{llrrrrrrr}
\toprule
Function & Metric & MTRBO & EGO & TREGO & TRIKE & TURBO & TRLBO & EGP-TS \\
\midrule
Abs & Best & 0.00 & 0.27 & 14.23 & 0.97 & 0.87 & 14.82 & 0.14 \\
 & Mean & \textbf{0.86} &   1.06($\approx$) & 43.99(+) & 45.24(+) & 2.43(+) & 41.50(+) & 5.33(+) \\
 & Worst & 1.72 & 1.76 & 64.02 & 100.00 & 5.02 & 85.91 & 7.89 \\

Ackley & Best & 0.00 & 0.18 & 7.27 & 11.99 & 0.05 & 17.45 & 2.23 \\
 & Mean & 1.93 & 4.32(+) & 40.54(+) & 23.09(+) & \textbf{1.04}(-) & 45.88(+) & 15.46(+) \\
 & Worst & 3.70 & 18.40 & 83.37 & 57.21 & 2.01 & 100.00 & 28.65 \\

AckleyTest & Best & 0.00 & 1.78 & 20.42 & 25.90 & 3.09 & 21.04 & 0.12 \\
 & Mean & \textbf{3.59} & 19.59(+) & 66.80(+) & 60.58(+) & 11.27(+) & 54.25(+) & 8.54($\approx$) \\
 & Worst & 12.24 & 32.53 & 100.00 & 91.95 & 18.78 & 91.26 & 35.41 \\

Eggholder & Best & 0.00 & 2.75 & 30.62 & 26.06 & 3.15 & 24.31 & 94.03 \\
 & Mean & \textbf{13.39 }& 23.10($\approx$) & 52.44(+) & 53.11(+) & 21.74(+) & 56.63(+) & 97.62(+) \\
 & Worst & 24.53 & 34.66 & 76.18 & 85.13 & 41.53 & 89.02 & 100.00 \\

Griewank & Best & 0.05 & 2.24 & 3.34 & 7.91 & 0.24 & 2.86 & 0.00 \\
 & Mean & 0.96 & 6.02(+) & 37.11(+) & 49.02(+) & 1.07(-) & 28.12(+) & \textbf{0.21}(-) \\
 & Worst & 3.40 & 13.80 & 93.68 & 100.00 & 2.53 & 91.31 & 0.52 \\
 
Penalty2 & Best & 0.00 & 0.11 & 13.07 & 2.04 & 0.10 & 2.24 & 0.01 \\
 & Mean & \textbf{0.05} & 0.53(+) & 35.87(+) & 21.64(+) & 1.29(+) & 10.96(+) & 0.10($\approx$) \\
 & Worst & 0.10 & 1.14 & 100.00 & 50.06 & 6.02 & 54.64 & 0.20 \\

Quartic & Best & 0.00 & 0.01 & 0.27 & 0.00 & 0.00 & 0.15 & 0.00 \\
 & Mean & \textbf{0.00} & 0.22(+) & 14.96(+) & 6.73(+) & 0.16(+) & 10.75(+) & 16.28(+) \\
 & Worst & 0.01 & 1.38 & 69.09 & 49.65 & 0.02 & 47.21 & 100.00 \\
 
Rastrigin & Best & 0.00 & 1.00 & 21.35 & 3.90 & 5.52 & 20.97 & 1.53 \\
 & Mean & \textbf{6.06} & 17.60(+) & 56.77(+) & 50.31(+) & 16.89(+) & 52.26(+) & 16.94(+) \\
 & Worst & 13.45 & 33.17 & 100.00 & 90.17 & 29.95 & 82.23 & 37.54 \\
 
Rosenbrock & Best & 0.00 & 0.02 & 0.95 & 0.63 & 0.00 & 0.30 & 0.01 \\
 & Mean & \textbf{0.03} & 0.90(+) & 29.75(+) & 8.39(+) & 1.21(+) & 21.74(+) & 4.57(+) \\
 & Worst & 0.06 & 3.34 & 88.45 & 28.83 & 5.34 & 100.00 & 17.46 \\
 
Scheffer & Best & 0.02 & 0.00 & 2.53 & 0.34 & 0.85 & 5.08 & 0.04 \\
 & Mean & \textbf{0.52} & 2.90(+) & 16.04(+) & 13.18(+) & 3.96(+) & 30.16(+) & 2.01(+) \\
 & Worst & 1.74 & 7.57 & 28.71 & 28.64 & 7.72 & 100.00 & 8.16 \\

SchwefelDouble & Best & 0.00 & 0.00 & 0.27 & 3.55 & 0.00 & 3.59 & 0.00 \\
 & Mean & \textbf{0.00} & \textbf{0.00}($\approx$) & 36.07(+) & 42.22(+) & 0.02(+) & 23.84(+) & 0.02(+) \\
 & Worst & 0.00 & 0.00 & 83.96 & 100.00 & 0.11 & 54.91 & 0.07 \\

SchwefelMax & Best & 0.00 & 1.04 & 23.46 & 1.27 & 0.19 & 9.46 & 0.31 \\
 & Mean & \textbf{0.10} & 21.70(+) & 62.46(+) & 42.18(+) & 3.52(+) & 40.62(+) & 0.81(+) \\
 & Worst & 0.21 & 42.48 & 100.00 & 75.10 & 7.16 & 57.53 & 1.76 \\

SchwefelSin & Best & 0.00 & 1.71 & 54.66 & 31.72 & 8.39 & 42.95 & 41.61 \\
 & Mean & \textbf{2.00} & 21.56(+) & 76.90(+) & 65.12(+) & 27.40(+) & 71.35(+) & 61.49(+) \\
 & Worst & 19.10 & 53.56 & 98.82 & 100.00 & 52.98 & 92.60 & 86.26 \\

Stairs & Best & 0.00 & 0.00 & 7.69 & 0.00 & 0.00 & 0.00 & 0.00 \\
 & Mean & \textbf{0.00} & \textbf{0.00}($\approx$) & 25.38(+) & 41.54(+) & \textbf{0.00}($\approx$) & 25.38(+) & 2.31(+) \\
 & Worst & 0.00 & 0.00 & 38.46 & 76.92 & 0.00 & 100.00 & 7.69 \\

Weierstrass & Best & 11.89 & 18.15 & 30.75 & 0.00 & 13.96 & 19.44 & 7.01 \\
 & Mean & \textbf{25.57} & 29.17($\approx$) & 64.84(+) & 38.57(+) & 27.34(-) & 57.88(+) & 28.36($\approx$) \\
 & Worst & 36.94 & 42.20 & 100.00 & 71.30 & 37.81 & 98.16 & 49.08 \\
\bottomrule
\end{tabular}
\end{table}
\begin{table}[]
\caption{Predicted mean optimal value over 30 trials for each method after 100 function evaluations for $20$ dimensional search space.}
\label{table:comparison_20d}
\begin{tabular}{llrrrrrrr}
\toprule
Function & Metric & MTRBO & EGO & TREGO & TRIKE & TURBO & TRLBO & EGP-TS \\
\midrule
Abs & Best & 0.00 & 66.56 & 73.06 & 69.34 & 57.17 & 58.40 & 12.22 \\
 & Mean & 2.15 & 70.45(+) & 87.51(+) & 88.66(+) & 75.45(+) & 82.79(+) & 34.21(+) \\
 & Worst & 4.38 & 75.78 & 98.40 & 100.00 & 88.43 & 99.89 & 59.28 \\
Ackley & Best & 14.07 & 53.62 & 71.18 & 70.11 & 65.27 & 67.30 & 0.00 \\
 & Mean & 16.90 & 65.74(+) & 83.87(+) & 87.48(+) & 82.29(+) & 84.04(+) & 22.96($\approx$) \\
 & Worst & 21.30 & 65.74 & 96.37 & 100.00 & 89.10 & 91.78 & 94.63 \\
AckleyTest & Best & 0.00 & 19.82 & 24.28 & 37.52 & 20.75 & 22.04 & 2.20 \\
 & Mean & 24.10 & 37.03(+) & 60.13(+) & 68.53(+) & 44.19(+) & 64.91(+) & 19.63(-) \\
 & Worst & 40.03 & 53.07 & 96.63 & 94.19 & 67.27 & 100.00 & 89.90 \\
Eggholder & Best & 0.00 & 47.99 & 57.42 & 67.45 & 50.92 & 44.56 & 35.02 \\
 & Mean & 17.86 & 59.91(+) & 72.89(+) & 81.22(+) & 60.47(+) & 74.13(+) & 61.89(+) \\
 & Worst & 30.73 & 69.62 & 86.42 & 100.00 & 75.34 & 92.77 & 75.59 \\
Griewank & Best & 3.32 & 13.88 & 29.82 & 15.25 & 9.30 & 39.19 & 0.00 \\
 & Mean & 17.15 & 25.85(+) & 58.56(+) & 51.90(+) & 37.01($\approx$) & 62.84(+) & 13.28(-) \\
 & Worst & 28.37 & 45.34 & 90.52 & 100.00 & 67.29 & 85.68 & 28.51 \\
Penalty2 & Best & 0.00 & 20.47 & 36.12 & 33.28 & 11.80 & 29.69 & 3.27 \\
 & Mean & 0.21 & 29.39(+) & 55.00(+) & 55.39(+) & 33.97(+) & 49.97(+) & 5.00(+) \\
 & Worst & 0.45 & 40.89 & 82.56 & 77.38 & 53.46 & 74.71 & 100.00 \\
Quartic & Best & 4.66 & 23.40 & 37.24 & 25.67 & 26.35 & 0.00 & 15.79 \\
 & Mean & 11.30 & 40.96(+) & 58.64(+) & 55.07(+) & 38.01(+) & 46.84(+) & 39.06(+) \\
 & Worst & 18.29 & 59.18 & 100.00 & 89.41 & 54.72 & 80.74 & 61.30 \\
Rastrigin & Best & 26.96 & 58.27 & 68.19 & 1.25 & 0.00 & 65.49 & 26.19 \\
 & Mean & 41.00 & 65.96(+) & 81.00(+) & 17.70(-) & 5.20(-) & 77.91(+) & 45.74(+) \\
 & Worst & 58.90 & 77.19 & 96.92 & 37.79 & 14.36 & 100.00 & 84.39 \\
Rosenbrock & Best & 0.00 & 3.05 & 33.86 & 47.61 & 45.58 & 38.57 & 6.81 \\
 & Mean & 1.15 & 5.14(+) & 66.06(+) & 61.81(+) & 59.71(+) & 62.33(+) & 19.29(+) \\
 & Worst & 6.58 & 7.74 & 93.45 & 80.69 & 81.78 & 100.00 & 39.95 \\
Scheffer & Best & 22.22 & 53.14 & 75.08 & 43.33 & 0.00 & 43.00 & 35.43 \\
 & Mean & 41.70 & 61.75(+) & 100.00(+) & 75.75(+) & 0.38(+) & 73.56(+) & 54.59(+) \\
 & Worst & 56.78 & 67.35 & 85.38 & 96.15 & 0.46 & 92.51 & 66.98 \\
SchwefelDouble & Best & 0.00 & 18.94 & 23.50 & 23.91 & 20.97 & 31.17 & 0.98 \\
 & Mean & 3.08 & 22.30(+) & 42.18(+) & 41.72(+) & 28.77(+) & 50.35(+) & 11.53($\approx$) \\
 & Worst & 5.30 & 27.28 & 63.86 & 100.00 & 42.91 & 93.06 & 22.90 \\
SchwefelMax & Best & 0.00 & 50.11 & 68.87 & 63.29 & 54.40 & 47.38 & 27.83 \\
 & Mean & 18.74 & 65.91(+) & 83.65(+) & 81.69(+) & 74.27(+) & 80.55(+) & 52.64(+) \\
 & Worst & 41.89 & 77.41 & 94.82 & 94.26 & 91.81 & 97.51 & 100.00 \\
SchwefelSin & Best & 40.51 & 59.73 & 77.75 & 68.23 & 46.18 & 0.00 & 69.56 \\
 & Mean & 45.00 & 67.33(+) & 88.11(+) & 79.64(+) & 63.97(+) & 82.40(+) & 83.90(+) \\
 & Worst & 47.76 & 75.18 & 100.00 & 93.99 & 73.58 & 92.35 & 95.45 \\
Stairs & Best & 0.00 & 46.76 & 41.67 & 46.76 & 49.07 & 48.61 & 1.85 \\
 & Mean & 1.85 & 58.94(+) & 65.79(+) & 68.61(+) & 62.08(+) & 72.31(+) & 4.35($\approx$) \\
 & Worst & 3.24 & 72.22 & 81.02 & 86.11 & 70.83 & 100.00 & 7.41 \\
Weierstrass & Best & 12.66 & 54.92 & 46.48 & 32.19 & 27.91 & 41.68 & 0.00 \\
 & Mean & 23.98 & 68.43(+) & 59.00(+) & 42.29(+) & 41.55(+) & 61.30(+) & 25.83(-) \\
 & Worst & 32.08 & 100.00 & 77.09 & 64.39 & 49.02 & 91.51 & 40.91 \\
\bottomrule
\end{tabular}
\end{table}
\begin{table}[]
\caption{Predicted mean optimal value over 30 trials for each method after 100 function evaluations for $50$ dimensional search space.}
\label{table:comparison_50d}
\begin{tabular}{llrrrrrrr}
\toprule
Function & Metric & MTRBO & EGO & TREGO & TRIKE & TURBO & TRLBO & EGP-TS \\
\midrule
Abs & Best & 0.00 & 64.45 & 86.25 & 82.48 & 71.82 & 72.25 & 88.36 \\
 & Mean & 5.12 & 77.65(+) & 90.16(+) & 90.69(+) & 81.34(+) & 89.37(+) & 91.09(+) \\
 & Worst & 15.26 & 84.57 & 95.39 & 99.08 & 90.74 & 100.00 & 95.94 \\
Ackley & Best & 0.00 & 65.29 & 62.22 & 51.97 & 65.29 & 69.42 & 71.76 \\
 & Mean & 4.58 & 71.55(+) & 82.88(+) & 80.84(+) & 77.90(+) & 80.21(+) & 77.27(+) \\
 & Worst & 10.39 & 75.89 & 100.00 & 96.96 & 85.10 & 94.90 & 80.74 \\
AckleyTest & Best & 0.00 & 38.43 & 62.54 & 53.37 & 41.72 & 51.30 & 12.44 \\
 & Mean & 24.36 & 49.76(+) & 81.67(+) & 74.53(+) & 62.03(+) & 76.08(+) & 32.02(-) \\
 & Worst & 54.67 & 69.20 & 100.00 & 91.84 & 78.16 & 88.71 & 51.86 \\
Eggholder & Best & 0.00 & 47.07 & 74.61 & 53.96 & 16.40 & 65.82 & 31.61 \\
 & Mean & 17.55 & 63.16(+) & 82.44(+) & 82.75(+) & 53.27(+) & 78.00(+) & 66.00(+) \\
 & Worst & 29.01 & 72.82 & 100.00 & 99.50 & 72.12 & 94.19 & 91.53 \\
Griewank & Best & 0.00 & 6.44 & 10.40 & 2.22 & 14.35 & 22.22 & 6.97 \\
 & Mean & 17.30 & 26.36(+) & 55.86(+) & 56.16(+) & 37.46(+) & 55.95(+) & 27.75(+) \\
 & Worst & 24.76 & 43.80 & 89.31 & 100.00 & 57.35 & 78.96 & 70.00 \\
Penalty2 & Best & 0.00 & 32.86 & 58.85 & 35.97 & 44.71 & 37.30 & 61.34 \\
 & Mean & 1.48 & 46.00(+) & 65.74(+) & 58.88(+) & 53.93(+) & 58.54(+) & 81.61(+) \\
 & Worst & 3.95 & 55.57 & 74.93 & 71.30 & 64.21 & 76.87 & 100.00 \\
Quartic & Best & 0.00 & 26.98 & 48.46 & 49.46 & 16.38 & 37.67 & 7.76 \\
 & Mean & 10.04 & 44.46(+) & 70.36(+) & 62.12(+) & 54.98(+) & 62.63(+) & 26.86(+) \\
 & Worst & 18.99 & 60.17 & 100.00 & 75.63 & 84.25 & 78.25 & 42.22 \\
Rastrigin & Best & 25.28 & 72.55 & 81.71 & 3.12 & 0.00 & 78.47 & 25.34 \\
 & Mean & 45.56 & 80.33(+) & 90.79(+) & 15.44(+) & 9.99(+) & 89.44(+) & 35.78(+) \\
 & Worst & 66.67 & 84.23 & 100.00 & 24.63 & 15.65 & 96.70 & 42.45 \\
Rosenbrock & Best & 0.00 & 30.13 & 41.90 & 32.93 & 37.56 & 30.25 & 21.68 \\
 & Mean & 5.16 & 40.74(+) & 63.16(+) & 58.72(+) & 51.61(+) & 59.69(+) & 26.36(+) \\
 & Worst & 9.99 & 54.48 & 100.00 & 93.51 & 73.63 & 89.05 & 49.19 \\
Scheffer & Best & 20.27 & 25.03 & 48.48 & 0.00 & 27.36 & 35.58 & 37.73 \\
 & Mean & 27.96 & 43.06(+) & 74.29(+) & 51.00(+) & 62.93(+) & 63.81(+) & 46.77(+) \\
 & Worst & 35.46 & 53.70 & 100.00 & 72.54 & 90.40 & 91.34 & 82.61 \\
SchwefelDouble & Best & 0.00 & 7.71 & 12.23 & 15.65 & 9.22 & 11.03 & 3.76 \\
 & Mean & 0.86 & 12.06(+) & 49.22(+) & 43.89(+) & 17.81(+) & 27.00(+) & 6.28(+) \\
 & Worst & 2.05 & 15.12 & 100.00 & 99.91 & 31.59 & 72.38 & 12.96 \\
SchwefelMax & Best & 0.00 & 50.23 & 47.96 & 51.10 & 61.63 & 52.34 & 82.75 \\
 & Mean & 7.96 & 56.35(+) & 70.97(+) & 73.24(+) & 72.62(+) & 72.76(+) & 90.13(+) \\
 & Worst & 17.50 & 65.50 & 81.41 & 82.64 & 81.44 & 81.18 & 100.00 \\
SchwefelSin & Best & 0.00 & 38.06 & 39.53 & 40.31 & 32.90 & 32.42 & 42.25 \\
 & Mean & 8.28 & 46.16(+) & 64.86(+) & 69.07(+) & 42.98(+) & 66.58(+) & 64.73(+) \\
 & Worst & 17.65 & 60.40 & 86.35 & 100.00 & 51.21 & 96.05 & 86.30 \\
Stairs & Best & 0.00 & 50.12 & 71.13 & 66.97 & 48.04 & 59.12 & 39.95 \\
 & Mean & 6.51 & 64.43(+) & 82.33(+) & 76.86(+) & 66.95(+) & 73.12(+) & 64.18(+) \\
 & Worst & 16.63 & 69.98 & 100.00 & 97.46 & 86.14 & 86.61 & 89.61 \\
Weierstrass & Best & 18.93 & 46.71 & 36.53 & 3.21 & 0.00 & 42.13 & 7.85 \\
 & Mean & 24.36 & 66.20(+) & 70.70(+) & 29.31(+) & 26.75(-) & 61.44(+) & 16.33(-) \\
 & Worst & 31.09 & 86.56 & 100.00 & 56.81 & 49.78 & 84.62 & 26.87 \\
\bottomrule
\end{tabular}
\end{table}

\begin{table}[]
\caption{Predicted mean optimal value over 30 trials for each method after 100 function evaluations for $100$ dimensional search space.}
\label{table:comparison_100d}
\begin{tabular}{llrrrrrrr}
\toprule
Function & Metric & MTRBO & EGO & TREGO & TRIKE & TURBO & TRLBO & EGP-TS \\
\midrule
Abs & Best & 0.00 & 63.04 & 81.44 & 73.25 & 76.01 & 77.22 & 76.04 \\
 & Mean & 6.56 & 76.20(+) & 88.42(+) & 85.42(+) & 80.50(+) & 85.45(+) & 80.82(+) \\
 & Worst & 15.80 & 82.15 & 100.00 & 94.30 & 84.48 & 95.20 & 96.66 \\
Ackley & Best & 0.00 & 57.78 & 68.09 & 61.49 & 67.39 & 59.41 & 60.41 \\
 & Mean & 7.18 & 63.82(+) & 74.65(+) & 74.15(+) & 71.04(+) & 73.28(+) & 77.34(+) \\
 & Worst & 15.55 & 68.03 & 78.68 & 83.85 & 79.76 & 85.25 & 100.00 \\
AckleyTest & Best & 0.00 & 36.77 & 42.84 & 46.09 & 39.93 & 40.34 & 12.32 \\
 & Mean & 21.78 & 44.46(+) & 61.38(+) & 62.06(+) & 50.83(+) & 56.92(+) & 31.95($\approx$) \\
 & Worst & 35.59 & 53.04 & 83.36 & 76.64 & 62.96 & 70.44 & 100.00 \\
Eggholder & Best & 0.00 & 39.28 & 56.83 & 60.17 & 27.47 & 47.45 & 88.13 \\
 & Mean & 12.62 & 56.18(+) & 77.44(+) & 76.34(+) & 47.83(+) & 68.67(+) & 82.46(+) \\
 & Worst & 42.93 & 69.40 & 100.00 & 92.06 & 61.90 & 85.01 & 69.17 \\
Griewank & Best & 0.00 & 13.71 & 30.33 & 33.72 & 16.92 & 31.22 & 38.44 \\
 & Mean & 16.94 & 28.30($\approx$) & 61.62(+) & 49.01(+) & 35.71(+) & 54.56(+) & 57.65(+) \\
 & Worst & 28.97 & 40.70 & 99.01 & 69.99 & 52.73 & 79.01 & 100.00 \\
Penalty2 & Best & 0.00 & 5.25 & 6.62 & 5.86 & 5.66 & 6.19 & 7.51 \\
 & Mean & 0.62 & 6.28(+) & 7.41(+) & 7.60(+) & 7.37(+) & 7.84(+) & 8.14(+) \\
 & Worst & 1.42 & 6.94 & 8.36 & 8.47 & 8.12 & 9.07 & 100.00 \\
Quartic & Best & 0.00 & 31.78 & 34.13 & 35.26 & 30.91 & 17.63 & 15.55 \\
 & Mean & 6.43 & 50.97(+) & 67.21(+) & 54.16(+) & 57.22(+) & 45.60(+) & 27.94(+) \\
 & Worst & 11.86 & 64.02 & 100.00 & 65.26 & 74.67 & 82.23 & 63.70 \\
Rastrigin & Best & 0.00 & 31.30 & 62.17 & 50.33 & 40.23 & 39.27 & 12.73 \\
 & Mean & 14.78 & 50.76(+) & 79.28(+) & 61.79(+) & 62.01(+) & 63.18(+) & 30.37(+) \\
 & Worst & 33.32 & 60.03 & 100.00 & 86.88 & 73.36 & 83.32 & 64.96 \\
Rosenbrock & Best & 0.00 & 51.33 & 65.94 & 29.59 & 57.32 & 47.08 & 24.52 \\
 & Mean & 7.71 & 57.12(+) & 76.12(+) & 54.93(+) & 65.05(+) & 63.03(+) & 54.60(+) \\
 & Worst & 16.42 & 64.34 & 90.00 & 75.24 & 73.20 & 82.50 & 100.00 \\
Scheffer & Best & 35.38 & 38.81 & 56.78 & 50.74 & 46.77 & 52.32 & 0.00 \\
 & Mean & 45.30 & 61.78(+) & 84.14(+) & 78.34(+) & 63.00(+) & 72.22(+) & 59.90(+) \\
 & Worst & 51.12 & 77.79 & 100.00 & 96.46 & 70.29 & 80.81 & 72.96 \\
SchwefelDouble & Best & 0.00 & 4.75 & 29.77 & 21.75 & 12.18 & 16.41 & 17.75 \\
 & Mean & 4.20 & 25.01(+) & 54.63(+) & 45.32(+) & 29.30(+) & 46.77(+) & 38.25(+) \\
 & Worst & 8.34 & 49.21 & 100.00 & 97.07 & 59.55 & 81.25 & 72.66 \\
SchwefelMax & Best & 0.00 & 66.43 & 70.77 & 53.83 & 70.13 & 79.36 & 61.46 \\
 & Mean & 29.06 & 76.38(+) & 85.55(+) & 87.99(+) & 85.38(+) & 89.99(+) & 80.62(+) \\
 & Worst & 58.93 & 85.29 & 98.26 & 99.90 & 97.26 & 97.88 & 100.00 \\
SchwefelSin & Best & 0.00 & 59.79 & 66.52 & 71.16 & 37.26 & 54.15 & 60.98 \\
 & Mean & 26.00 & 66.75(+) & 82.54(+) & 100.00(+) & 58.41(+) & 79.27(+) & 83.30(+) \\
 & Worst & 42.45 & 71.87 & 93.30 & 86.17 & 78.12 & 94.97 & 95.17 \\
Stairs & Best & 0.00 & 64.90 & 70.67 & 68.27 & 60.10 & 62.98 & 57.33 \\
 & Mean & 20.26 & 71.26(+) & 83.77(+) & 84.10(+) & 70.94(+) & 79.01(+) & 64.87(+) \\
 & Worst & 31.13 & 77.16 & 100.00 & 93.87 & 77.40 & 94.23 & 77.52 \\
Weierstrass & Best & 0.00 & 28.53 & 45.49 & 40.43 & 56.08 & 9.57 & 23.93 \\
 & Mean & 20.95 & 69.82(+) & 75.96(+) & 60.62(+) & 71.60(+) & 41.09($\approx$) & 43.18(+) \\
 & Worst & 32.50 & 87.91 & 100.00 & 86.09 & 95.53 & 72.94 & 75.90 \\
\bottomrule
\end{tabular}
\end{table}


\begin{table}[]
\caption{Predicted mean optimal value over 30 trials for each method after 100 function evaluations for $500$ dimensional search space.}
\label{table:comparison_500d}
\begin{tabular}{llrrrrrrr}
\toprule
Function & Metric & MTRBO & EGO & TREGO & TRIKE & TURBO & TRLBO & EGP-TS \\
\midrule
Abs & Best & 0.00 & 58.37 & 70.49 & 63.92 & 64.01 & 70.23 & 53.96 \\
 & Mean & 11.90 & 63.74(+) & 76.35(+) & 73.68(+) & 69.41(+) & 74.88(+) & 73.97(+) \\
 & Worst & 20.33 & 68.74 & 82.01 & 83.55 & 74.90 & 80.55 & 100.00 \\
Ackley & Best & 0.00 & 53.38 & 54.92 & 54.96 & 58.12 & 55.41 & 70.13 \\
 & Mean & 3.41 & 57.88(+) & 64.66(+) & 65.11(+) & 62.27(+) & 59.70(+) & 79.28(+) \\
 & Worst & 8.64 & 60.63 & 72.26 & 73.23 & 69.28 & 63.32 & 100.00 \\
AckleyTest & Best & 0.00 & 18.17 & 35.94 & 31.38 & 22.62 & 28.63 & 4.21 \\
 & Mean & 14.16 & 25.85(+) & 46.76(+) & 46.63(+) & 33.55(+) & 41.91(+) & 36.84($\approx$) \\
 & Worst & 26.04 & 31.53 & 63.18 & 60.60 & 46.96 & 57.89 & 100.00 \\
Eggholder & Best & 0.00 & 55.34 & 70.44 & 57.49 & 41.14 & 55.45 & 70.84 \\
 & Mean & 12.53 & 63.66(+) & 83.05(+) & 77.69(+) & 53.84(+) & 71.56(+) & 73.19(+) \\
 & Worst & 23.52 & 69.30 & 100.00 & 91.17 & 65.49 & 89.95 & 78.29 \\
Griewank & Best & 0.00 & 14.91 & 18.82 & 0.32 & 10.42 & 15.40 & 21.42 \\
 & Mean & 8.17 & 19.56(+) & 40.35(+) & 38.81(+) & 29.14(+) & 40.71(+) & 37.03(+) \\
 & Worst & 15.17 & 30.56 & 60.59 & 60.82 & 48.98 & 66.54 & 100.00 \\
Penalty2 & Best & 0.00 & 56.56 & 76.62 & 61.75 & 65.35 & 73.30 & 22.94 \\
 & Mean & 2.80 & 69.26(+) & 86.38(+) & 78.16(+) & 73.46(+) & 84.80(+) & 35.51(+) \\
 & Worst & 9.63 & 81.42 & 97.45 & 98.82 & 84.73 & 100.00 & 61.07 \\
Quartic & Best & 0.00 & 61.66 & 69.11 & 2.54 & 64.14 & 19.12 & 37.00 \\
 & Mean & 8.87 & 70.50(+) & 85.73(+) & 24.06(+) & 74.78(+) & 37.03(+) & 52.24(+) \\
 & Worst & 17.48 & 80.55 & 100.00 & 84.74 & 82.20 & 63.85 & 63.77 \\
Rastrigin & Best & 13.77 & 56.47 & 61.12 & 0.00 & 59.43 & 62.20 & 31.50 \\
 & Mean & 21.06 & 63.93(+) & 76.46(+) & 34.96(+) & 70.62(+) & 70.07(+) & 83.83(+) \\
 & Worst & 34.89 & 76.06 & 94.25 & 96.78 & 79.77 & 79.48 & 100.00 \\
Rosenbrock & Best & 0.00 & 67.79 & 70.45 & 40.87 & 76.13 & 40.00 & 42.40 \\
 & Mean & 6.36 & 76.09(+) & 90.46(+) & 69.57(+) & 82.09(+) & 61.50(+) & 60.44(+) \\
 & Worst & 12.94 & 81.30 & 100.00 & 85.83 & 86.04 & 82.58 & 76.25 \\
Scheffer & Best & 0.00 & 30.89 & 50.56 & 21.07 & 15.55 & 20.73 & 23.62 \\
 & Mean & 19.47 & 53.22(+) & 75.95(+) & 48.52(+) & 66.35(+) & 49.94(+) & 52.28(+) \\
 & Worst & 34.62 & 69.77 & 96.24 & 76.63 & 95.30 & 78.72 & 100.00 \\
SchwefelDouble & Best & 0.00 & 4.37 & 16.53 & 6.64 & 13.43 & 17.46 & 7.89 \\
 & Mean & 1.19 & 15.22(+) & 48.03(+) & 34.69(+) & 39.26(+) & 33.96(+) & 40.91(+) \\
 & Worst & 2.65 & 24.39 & 96.79 & 87.63 & 24.38 & 50.08 & 100.00 \\
SchwefelMax & Best & 0.00 & 73.61 & 81.85 & 51.28 & 84.92 & 74.48 & 35.43 \\
 & Mean & 42.57 & 78.01(+) & 92.44(+) & 87.84(+) & 92.91(+) & 84.98(+) & 62.89(+) \\
 & Worst & 71.41 & 84.52 & 99.51 & 98.41 & 100.00 & 97.86 & 90.14 \\
SchwefelSin & Best & 0.00 & 35.41 & 51.51 & 33.09 & 30.09 & 42.89 & 26.46 \\
 & Mean & 6.74 & 50.76(+) & 66.32(+) & 66.20(+) & 45.96(+) & 64.11(+) & 73.02(+) \\
 & Worst & 11.31 & 64.74 & 75.86 & 94.40 & 63.80 & 89.56 & 100.00 \\
Stairs & Best & 0.00 & 67.99 & 79.22 & 71.04 & 62.61 & 75.26 & 67.46 \\
 & Mean & 13.74 & 77.30(+) & 92.81(+) & 87.09(+) & 78.87(+) & 84.02(+) & 78.07(+) \\
 & Worst & 23.31 & 85.65 & 100.00 & 99.37 & 90.35 & 90.77 & 91.61 \\
Weierstrass & Best & 7.61 & 13.58 & 14.86 & 13.73 & 16.44 & 0.00 & 25.34 \\
 & Mean & 9.96 & 17.09(+) & 25.88(+) & 20.38(+) & 20.56(+) & 9.99(-) & 63.89(+) \\
 & Worst & 11.59 & 20.70 & 33.62 & 27.69 & 23.83 & 18.29 & 100.00 \\
\bottomrule
\end{tabular}
\end{table}

\begin{table}[htbp]
\caption{Comparison of performance measures on the portfolio optimization problem using data from the Indian National Stock Exchange (100 stocks) and the New York Stock Exchange (200 stocks).}
\label{table:portfolio}
\begin{tabular}{@{}|l|l|lllllll|@{}}
\toprule
Data 
& Performance        & MTRBO       & EGO        & TREGO       & TRIKE       & TURBO        & TRLBO   & EGP-TS  \\
\midrule
\multirow{4}{*}{Indian data}
& Return           &  0.22344    & 0.20196     & 0.22344     & 0.13709     & 0.21015     & 0.13071  & 0.14969   \\
& Risk             &  0.03835    & 0.06346     & 0.05621     & 0.02694     & 0.06691     & 0.03826  & 0.03518  \\
& Objective value& \textbf{0.204265} & 0.17023 & 0.19533     & 0.12362     & 0.176695     & 0.11157   & 0.1321  \\
& Sharpe ratio     &  \textbf{5.82549}    & 3.18257     & 3.97502     & 5.08827     & 3.14097     & 3.41652   & 4.25497  \\
\hline
\multirow{4}{*}{USA data }
& Return           &  0.18029    & 0.03769     & 0.04238     & 0.30064     & 0.13267     & 0.15505   & 0.11245  \\
& Risk             &  0.05524    & 0.01989     & 0.02261     & 0.11900     & 0.05422     & 0.06387  & 0.04159  \\
& Objective value& 0.15267 & 0.027745 & 0.031075     & \textbf{0.24114}   & 0.10556   & 0.123115   & 0.09165  \\
& Sharpe ratio     &  \textbf{3.26358}    & 1.89471     & 1.87401     & 2.52641     & 2.44675     & 2.42754  & 2.70377   \\
\botrule
\end{tabular}
\end{table}
\begin{table}[]
\caption{Sensitivity analysis of the hyperparameters for the proposed MTRBO method for Ackley function}
\label{table:sensitivity_analysis}
\begin{tabular}{llrrrr}
\toprule
No of initial & Initial radius & Length scale & No of iteration & No of subiteration & Objective value \\
\midrule
10 & 0.1 & 0.4 & 50 & 15 & 0.000002 \\
10 & 0.1 & 0.4 & 75 & 5 & 0.000981 \\
10 & 0.1 & 0.6 & 75 & 10 & 0.001584 \\
10 & 0.1 & 0.6 & 100 & 10 & 0.000001 \\
10 & 0.1 & 0.8 & 50 & 5 & 0.000023 \\
10 & 0.3 & 0.6 & 50 & 15 & 0.001330 \\
10 & 0.3 & 0.6 & 75 & 5 & 0.000002 \\
10 & 0.3 & 0.8 & 75 & 10 & 0.000004 \\
10 & 0.3 & 0.8 & 75 & 15 & 0.000427 \\
10 & 0.3 & 0.8 & 100 & 5 & 0.000198 \\
10 & 0.5 & 1.0 & 100 & 10 & 0.157913 \\
10 & 0.5 & 0.6 & 100 & 15 & 0.394116 \\
10 & 0.5 & 0.6 & 75 & 10 & 0.001505 \\
10 & 0.5 & 0.8 & 100 & 15 & 0.108247 \\
10 & 0.5 & 1.0 & 50 & 5 & 0.462274 \\
15 & 0.1 & 0.4 & 75 & 15 & 0.000054 \\
15 & 0.1 & 0.4 & 100 & 5 & 0.000000 \\
15 & 0.1 & 0.6 & 50 & 10 & 0.000004 \\
15 & 0.1 & 0.8 & 50 & 5 & 0.000985 \\
15 & 0.3 & 0.2 & 75 & 10 & 0.001858 \\
15 & 0.3 & 0.4 & 100 & 15 & 0.008548 \\
15 & 0.3 & 1.0 & 200 & 5 & 0.003710 \\
15 & 0.5 & 0.6 & 100 & 10 & 0.004772 \\
15 & 0.5 & 0.8 & 100 & 10 & 0.004534 \\
15 & 0.5 & 1 & 75 & 15 & 0.042851 \\
15 & 0.8 & 1.2 & 125 & 5 & 0.117522 \\
15 & 1.5 & 0.6 & 75 & 10 & 0.562320 \\
20 & 0.1 & 0.4 & 50 & 5 & 0.000001 \\
20 & 0.1 & 0.4 & 50 & 10 & 0.000000 \\
20 & 0.1 & 0.4 & 150 & 5 & 0.000000 \\
20 & 0.1 & 0.8 & 125 & 15 & 0.000002 \\
20 & 0.3 & 0.6 & 125 & 5 & 0.014945 \\
20 & 0.3 & 1.2 & 50 & 15 & 0.000005 \\
20 & 0.5 & 0.4 & 50 & 15 & 0.003265 \\
20 & 0.5 & 1.0 & 150 & 15 & 0.001013 \\
20 & 0.5 & 1.2 & 75 & 5 & 0.000060 \\
20 & 1.0 & 0.8 & 50 & 15 & 0.450452 \\
25 & 0.1 & 0.4 & 100 & 15 & 0.000000 \\
25 & 0.1 & 0.6 & 100 & 15 & 0.000116 \\
25 & 0.1 & 0.6 & 150 & 5 & 0.008211 \\
25 & 0.1 & 1.0 & 200 & 5 & 0.000001 \\
25 & 0.3 & 0.6 & 100 & 15 & 0.000000 \\
25 & 0.3 & 1.2 & 75 & 15 & 0.000002 \\
25 & 0.3 & 1.2 & 150 & 10 & 0.000022 \\
25 & 0.5 & 0.4 & 50 & 15 & 0.004026 \\
25 & 0.5 & 0.8 & 100 & 10 & 0.063435 \\
25 & 1.0 & 1.0 & 125 & 5 & 0.001457 \\
\bottomrule
\end{tabular}

\end{table}

\begin{figure}[]
\centering
\includegraphics[width=1\textwidth]{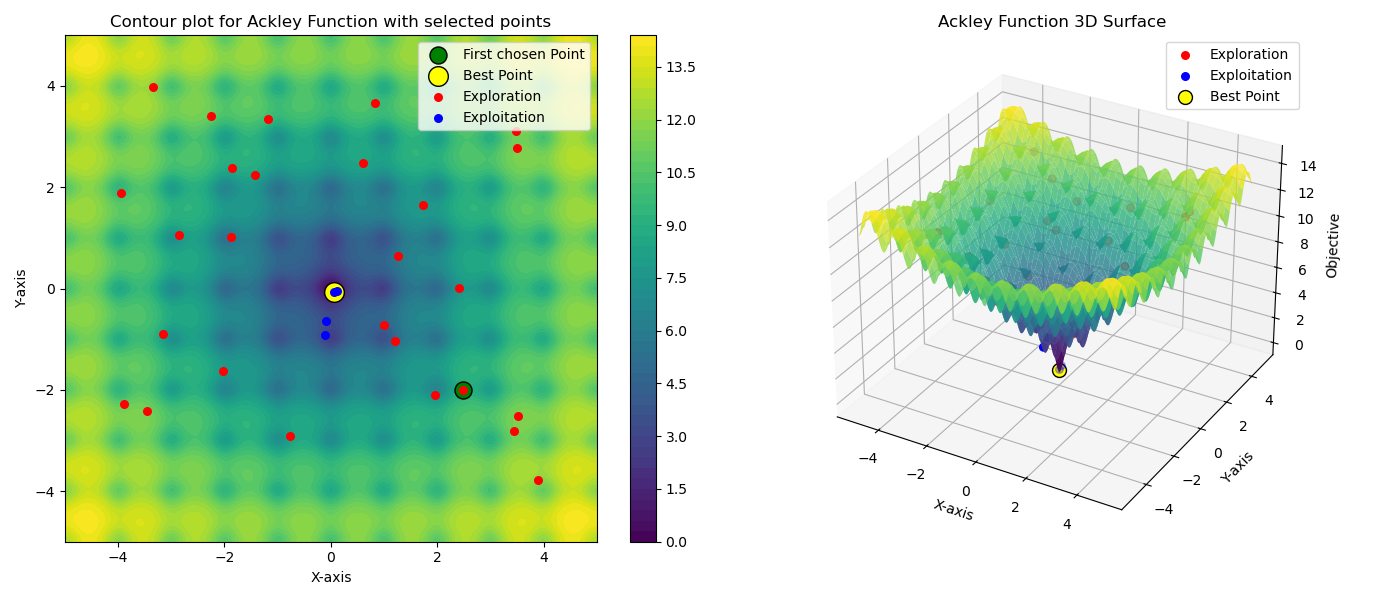}
\includegraphics[width=1\textwidth]{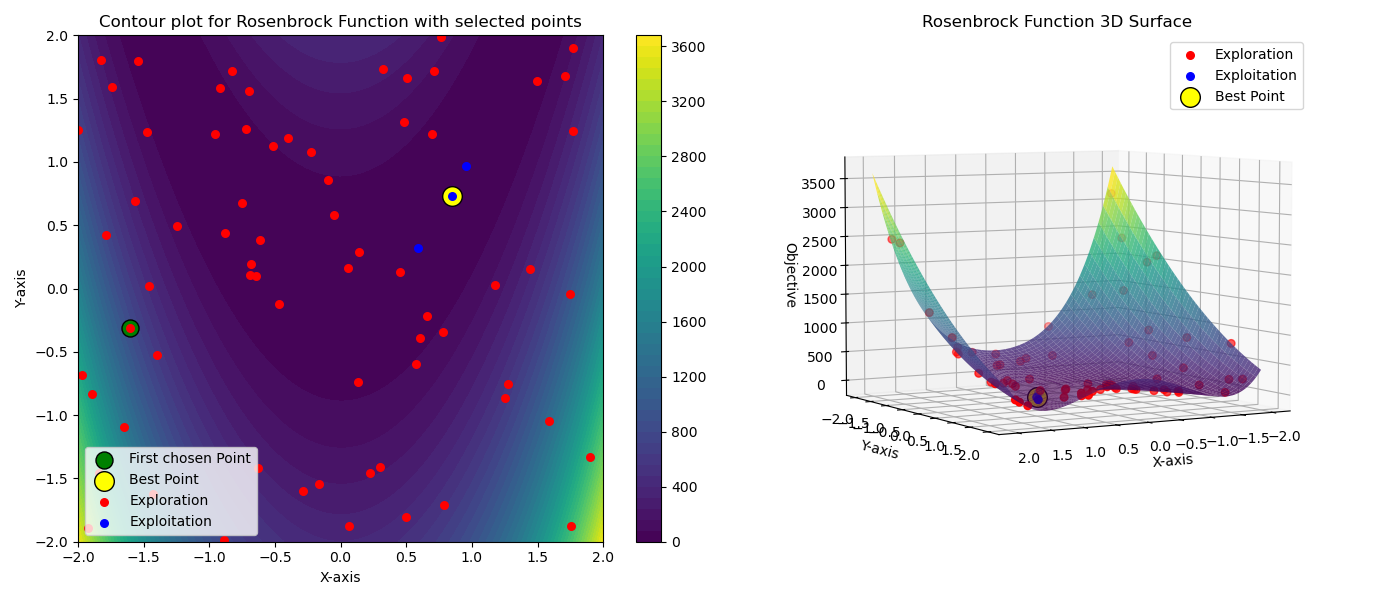}
\includegraphics[width=1\textwidth]{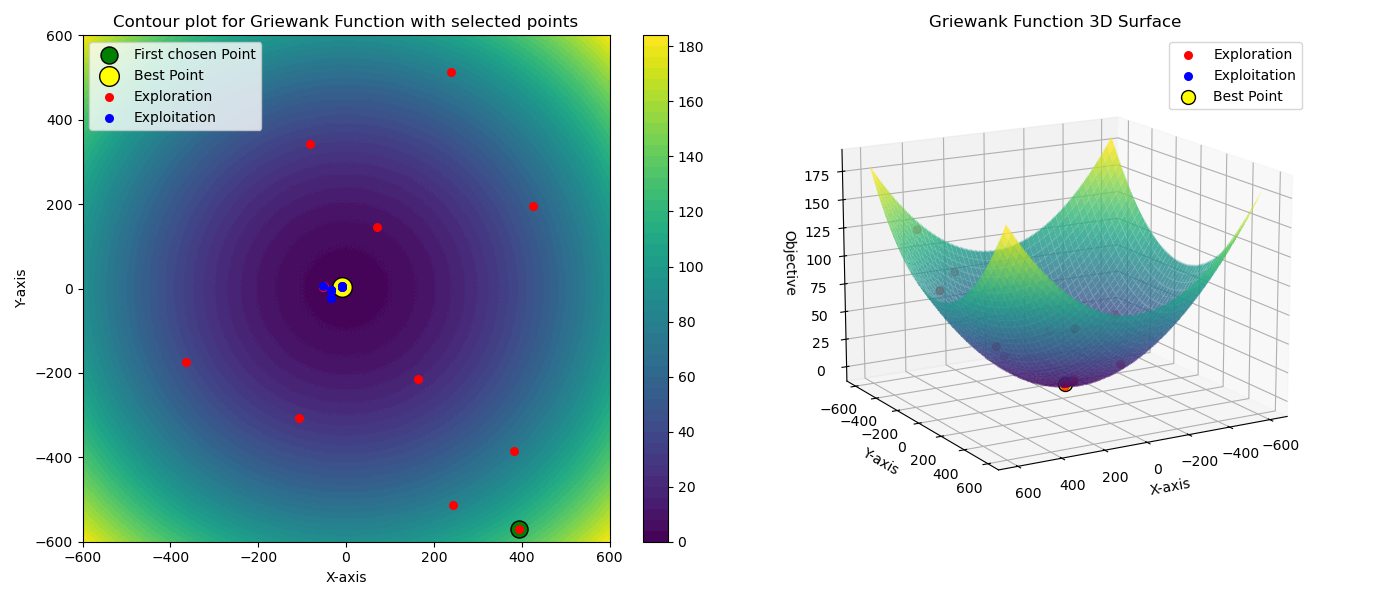}

\caption{Contour plot together with locations of the query points in the search space after every iteration of the optimization process and 3d surface plot for Ackley, Rosenbrock, and Griewank function for an arbitrary trial run. Points colored red are found from the exploration stage, and the points colored blue are found from the exploitation stage.}
\label{image:contour_1}
\end{figure}

\begin{figure}[htbp]
\centering
\includegraphics[width=1\textwidth]{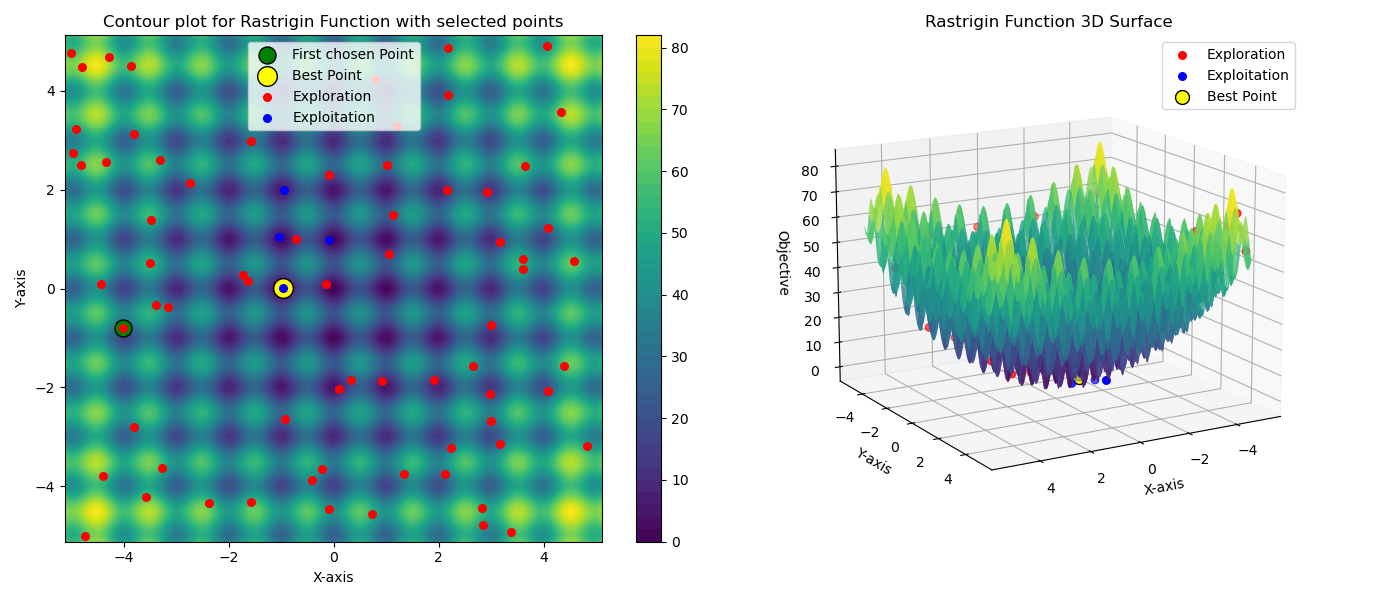}
\includegraphics[width=1\textwidth]{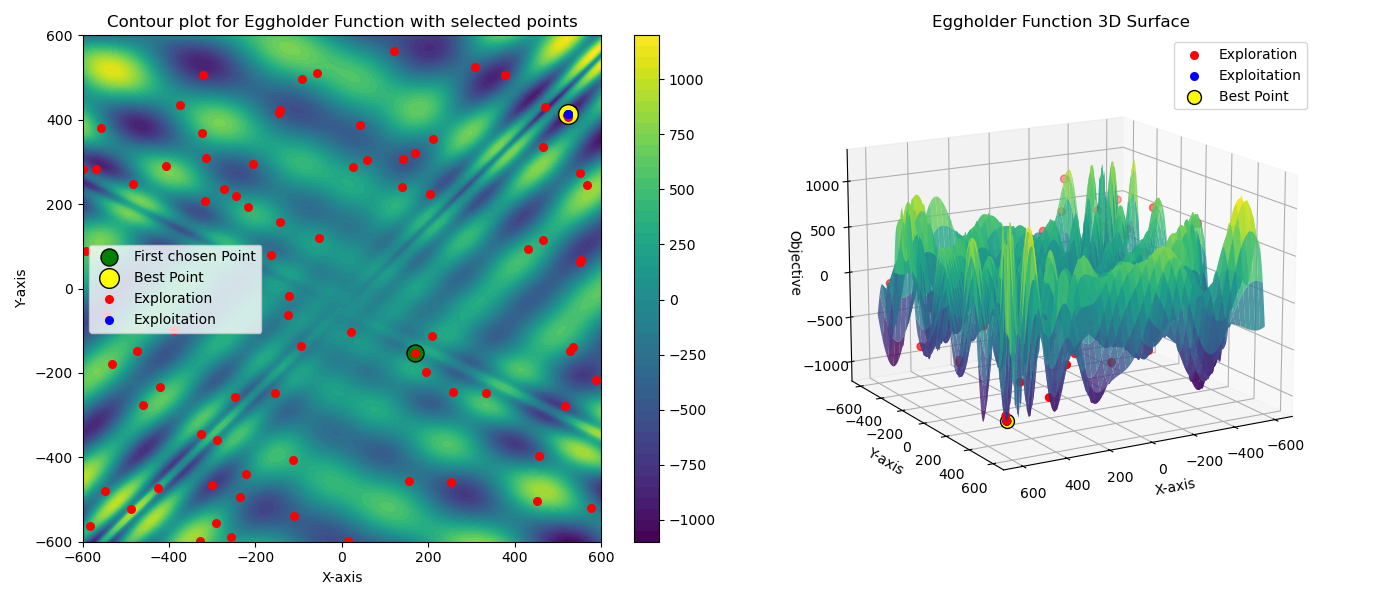}
\includegraphics[width=1\textwidth]{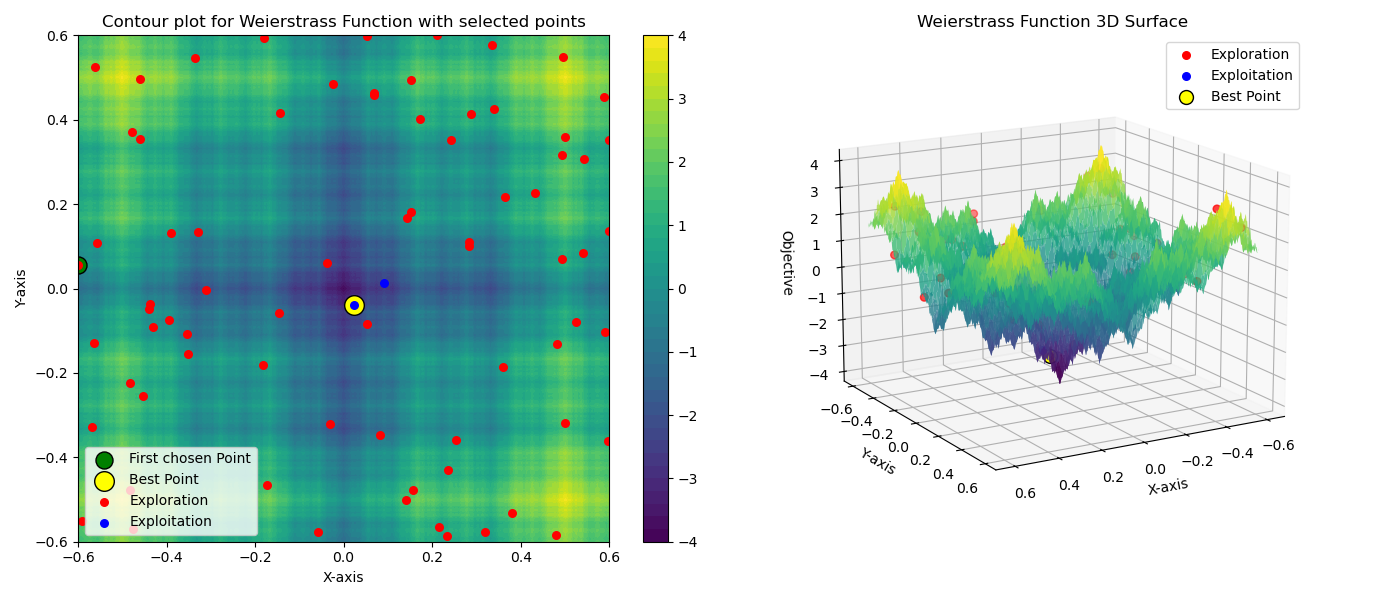}

\caption{Contour plot together with locations of the query points in the search space after every iteration of the optimization process and 3d surface plot for Rastrigin, Eggholder, and Weierstras function for an arbitrary trial run. Points colored red are found from the exploration stage, and the points colored blue are found from the exploitation stage.}
\label{image:contour_2}
\end{figure}

\begin{figure}[htbp]
\centering
\includegraphics[width=0.8\textwidth]{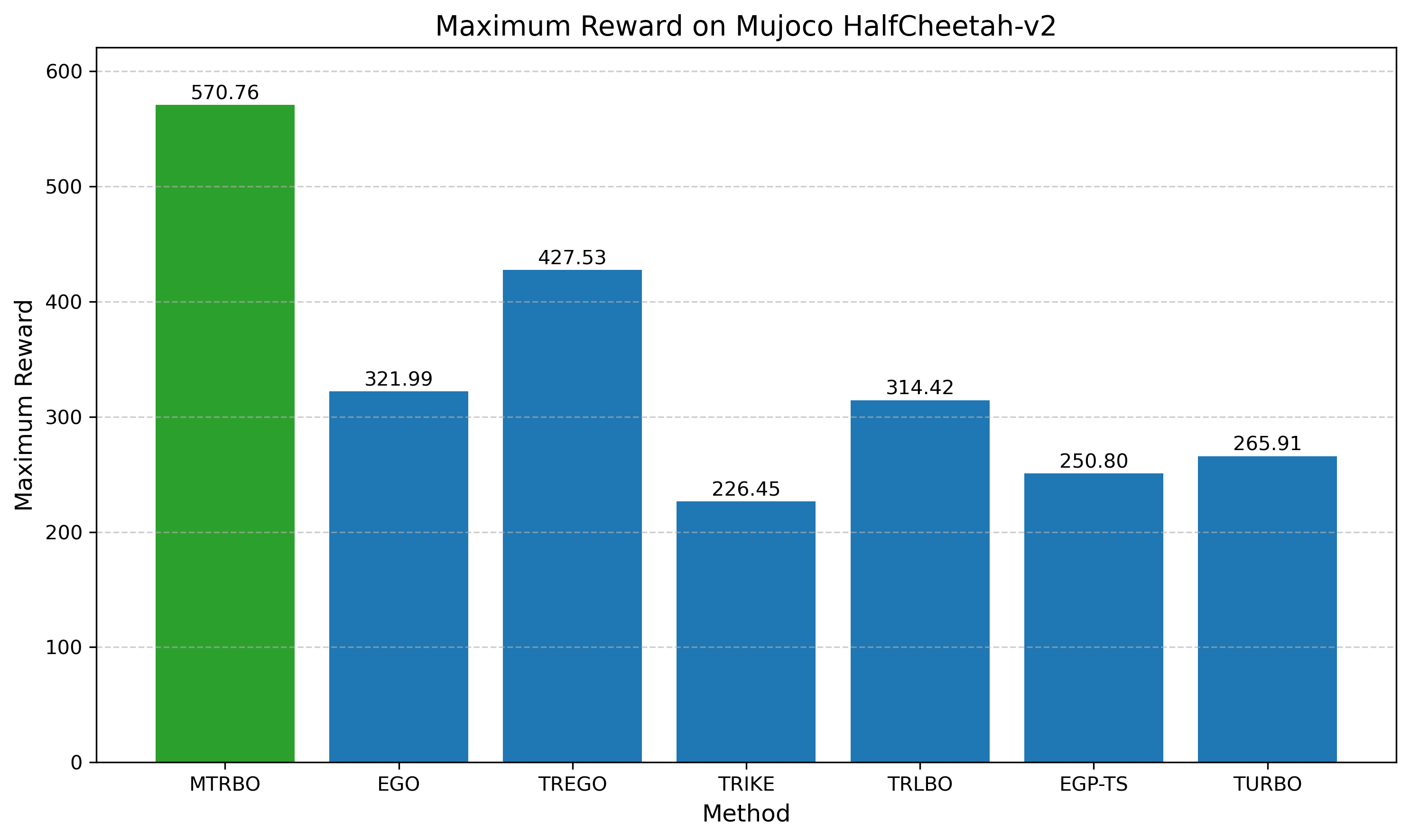}
\caption{Bar plot showing the performance comparison of MTRBO with other baseline methods on the MuJoCo HalfCheetah environment. The y-axis represents the cumulative reward, and the x-axis denotes the methods.}
\label{image:mujoco}
\end{figure}

\begin{figure}[htbp]
\centering

\includegraphics[width=0.8\textwidth]{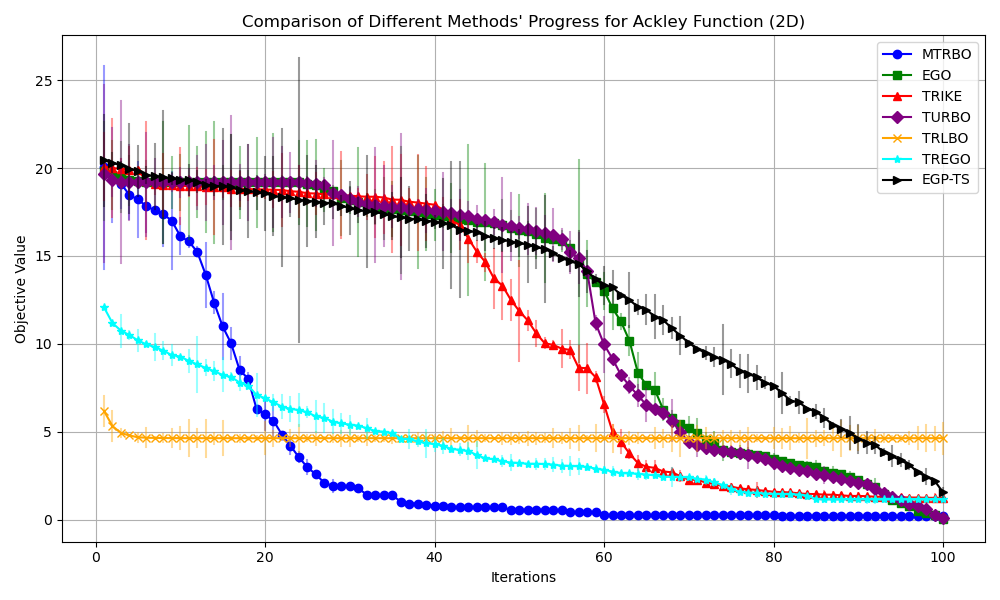}

\caption{Comparison of mean objective values with deviation from mean of all the methods for Ackley function (2D) against iteration.}
\label{image:progress}
\end{figure}

\begin{figure}[htbp]
\centering

\includegraphics[width=0.8\textwidth]{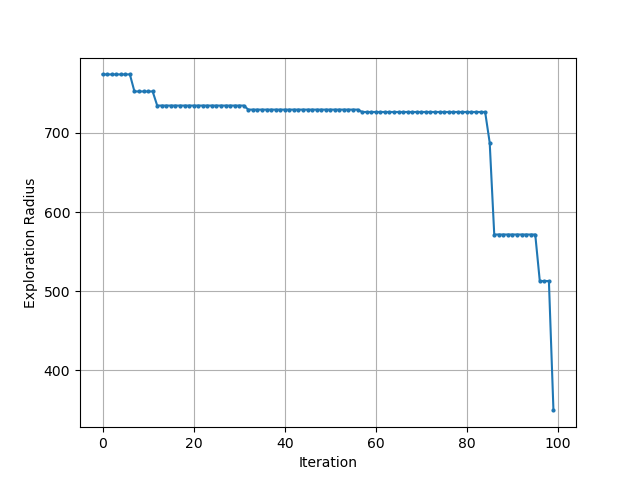}

\caption{Visualization of changes of the trust region radius for exploration stage after each iteration for Eggholder function (2D) .}
\label{image:exploration_radius}
\end{figure}

\subsection{Discussion}
To assess the effectiveness of the proposed algorithm, a comprehensive comparison against several state-of-the-art black-box optimization methods across a diverse set of test functions with varying dimensions has been conducted. The normalized scores of each method for all the test functions for different dimensions are presented in Tables \ref{table:comparison_2d}, \ref{table:comparison_20d}, \ref{table:comparison_50d}, \ref{table:comparison_100d}, \ref{table:comparison_500d} in detail.
\begin{itemize}
  \item For most test functions across different dimensions, the mean normalized score (as defined in Table \ref{table:experiment_summary}) achieved by the proposed MTRBO method is better than those of the other methods.
  \item The worst performance of the method is also close to the overall minimum observed across all methods and runs.
  \item In many cases, the minimum value obtained by the proposed method matches $f_{min}$.
\end{itemize}
\par
To understand the individual contributions of key components within the proposed method (i.e, exploration and exploitation), an ablation study is conducted. This involved systematically disabling one component of the algorithm and observing their impact on performance. Removing exploration leads to poor global search, and removing exploitation leads to overlook potential region. Enabling both exploration and exploitation in MTRBO leads to better results than using either exploration or exploitation alone. The results can be seen in details in Table \ref{table:ablation_study}.
\par
To validate the observed performance differences between MTRBO and other baseline methods, the Wilcoxon rank-sum test has been employed, as different random initial samples are used for different methods. This test was conducted for each benchmark function to determine whether the improvements achieved by MTRBO are statistically significant. The results are also shown in Tables (\ref{table:comparison_2d},\ref{table:comparison_20d},\ref{table:comparison_50d},\ref{table:comparison_100d},\ref{table:comparison_500d}), indicate that in the majority of cases, MTRBO significantly outperforms the compared methods at the $5\%$ significance level $(p<0.05)$. Symbols $+, -$ indicate compared method is significantly worse(larger objective values), significantly better(smaller objective values) with statistical significance $(p < 0.05)$, and $\approx$ indicates no significant difference($p \geq 0.05$) relative to MTRBO; similar to \citep{namura2021surrogate}, \citep{chugh2016surrogate}. These findings highlight that the improvements in performance are not due to random variation but reflect meaningful enhancements provided by the method.
\par
The proposed MTRBO method has been applied to two significantly different problem domains: the MuJoCo HalfCheetah reinforcement learning environment and a portfolio allocation problem in finance. In the MuJoCo HalfCheetah task, where the goal is to learn a control policy that maximizes the cumulative reward over time, the performance of MTRBO was benchmarked against several baseline optimization methods. As illustrated in the bar plot (Figure~\ref{image:mujoco}), it achieved a higher cumulative reward than the competing methods. In the portfolio allocation problem, the goal is to maximize return and minimize risk, under constraints on asset weights. Though in case of test functions normalization of each variable is not performed, but for most of the real world problems one needs to normalize each of the variables, as for different variables the ranges can differ significantly, which may affect the optimization performance. Here, in portfolio optimization problem, the weights are assumed to be in the range $[0,1]$. The comparative results presented in Table~\ref{table:portfolio} show that the proposed method most of the time achieves both higher objective values and better Sharpe ratios compared to other methods. 
\par
In Figure \ref{image:progress}, the progress of each method on the 2D Ackley function over 100 iterations is shown based on the mean values with the spreads from 30 independent trial runs to provide insight into how different methods approach an optimal or near-optimal solution, as well as their respective convergence speeds. From the figure, it is evident that the proposed MTRBO method exhibits a faster convergence rate compared to the baseline methods. Although TRLBO performs better than MTRBO in the initial stages, it tends to get stuck in a local optimum. This is primarily because TRLBO focuses heavily on exploitation, and also by employing trust regions to limit the number of samples used in the Gaussian process. While this enhances exploitation, it can lead the method to overlook potentially better regions for exploration. Similar to many Bayesian optimization methods, MTRBO emphasizes exploration during the early stages. This explains why TRLBO may initially outperform MTRBO but eventually suffers from the risk of premature convergence to local optima.
\par
To illustrate how the proposed method searches the space and gradually converges to the optimal or near-optimal solution during a particular test run on a few benchmark functions, Figures \ref{image:contour_1} and \ref{image:contour_2} are presented. These figures also distinguish between the points selected during the exploration stage and those chosen during the exploitation stage. From these figures, one limitation of the proposed method becomes apparent: it is biased toward exploration. However, this can be mitigated by increasing the number of subiterations in the exploitation stage. Another limitation is the computational complexity of the method—at each iteration, a Gaussian process is fitted to the observed data, which incurs a cost of $\mathcal{O}(N^3)$.
\par
To visualize, after each iteration, how the trust region radius for the exploration stage is changing, Figure \ref{image:exploration_radius} is provided. This gives us insight about the fact that how our method is exploring the search space, as more and more samples are being observed the radius of the trust region is decreasing.
\subsection{Hyper-parameter sensitivity analysis}
In the experiments discussed in this work $\gamma_{\text{inc}}=2, \gamma_{\text{dec}}= 1/2, \eta_{\text{inc}}= 0.01, \delta =0.1$ is fixed throughout. Sensitivity analysis can be done for them also; the increment and decrement coefficients control the size of the trust region based on the improvement of the gradient. If the improvement threshold $\eta_{\text{inc}}= 0.01$ is high, meaning a high improvement of the posterior mean over the current best is needed. So, one can miss a possible better point; instead, the radius is reduced, maybe focusing on a less optimal point. So, a low improvement threshold is needed. The step size in gradient ascent $\delta =0.1$ is fixed, as this does not affect much in this work, as the step is taken in such a way that it will not go outside the current trust region. One can analyze the sensitivity of the hyperparameters $\gamma_{\text{inc}}=2, \gamma_{\text{dec}}= 1/2$, but in this work, they are treated as fixed constants.
In this analysis, the effect of only 5 hyperparameters is analyzed. Those are, the number of initial observations ($n_0$), sampling budget, i.e, the total number of function evaluations ($N$), number of sub-iterations ($N^{'}$) in the exploitation stage, length-scale ($l$) in the kernel function of the Gaussian process, (though in all the experiments length scale is determined using MLE, but it affects the performance) and the initial trust-region radius ($r_0$) for exploitation. The behavior of the mean objective values over 30 trials for different settings of the hyperparameter for the Ackley (2D) function is observed and provided in Table \ref{table:sensitivity_analysis}. A notable observation from the results is that a smaller initial trust region radius ($r_0=0.1 \to 0.5$) often leads to significantly better performance in terms of the final objective value. This suggests that starting with a finer local search region allows the algorithm to exploit high-potential regions more effectively from the beginning. Another important observation is that the length scale of the GP kernel around $0.6 \to 0.8$ results in better performance. It is also evident that, as the number of iterations and the number of sub-iterations increase, the method performs better, which refers to the fact that the method performs more balanced exploration and exploitation. High $N'$ means that the region is locally exploited well, and high $N$ means the search space is explored well. So, high $N$ and $N'$ are giving better results, but that comes with the cost of the complexity of the model. The proposed method is not sensitive to the number of initial points, because with appropriate other hyperparameter values, the method performs very well for different values of $n_0$. In conclusion, a low initial trust region radius, a moderate length scale of the GP kernel, and a high number of iterations and subiterations (in exploitation) are desirable for better performance of the model.
\subsection{Application to Portfolio optimization problem}
Consider a portfolio of $n$ assets,  the main task is to find optimal weights corresponding to the assets. Assume that $R_{1}, R_{2}, \cdots R_{n}$ are the random variables denoting the return for the individual assets. Taking the expectation of the random variables, individual expected return of the assets are found to be, $\pi = [\pi_{1}, \pi_{2},\cdots, \pi_{n}] $, where $\pi_{i} = \mathbb{E}[R_{i}]$ for all $ i= 1,2 \cdots, n$. If $\omega= [\omega_{1}, \omega_{2}, \cdots, \omega_{n}]$ is the weight vector associated with the assets, the random variable denoting the total portfolio return will be $R = \omega_{1}R_{1}+ \omega_{2}R_{2}+ \cdots +\omega_{n}R_{n}$. So, the total expected return of the portfolio will be  
\begin{equation}\label{eq:portfolio_return}
\mathcal{R}=  \sum_{i=1}^{n} \omega_{i}\pi_{i}
\end{equation}
Covariance between the returns of assets $i$ and $j$ is denoted as $\sigma_{ij}$, and defined by $\sigma_{ij}= \rho_{ij}\sigma_{i}\sigma_{j}$ where $\rho_{ij}$ is the correlation coefficient between $R_{i} $ and $R_{j}$ and $\sigma_{i}, \sigma_{j}$ are variance of $R_{i}$ and $R_{j}$ respectively. Therefore, the variance of the total return of the portfolio will be
\begin{equation}\label{eq:Portfolio_risk}
\sigma= \sum_{i=1}^{n}\sum_{j=1}^{n} \omega_{i}\omega_{j}\sigma_{ij}
\end{equation}
Here, Equation (\ref{eq:portfolio_return}) represents the total expected return of the mean-variance portfolio allocation problem, which is desired to be maximized. Whereas Equation (\ref{eq:Portfolio_risk}) refers to the total associated variance taken as a risk measure, which is to be minimized. Subject to the constraints, the weight proportions associated with the assets sum to $1$ and all weights are either positive or zero. A mean-variance portfolio optimization problem is defined as follows 
\begin{equation}\label{eq:Portfolio_objective}
  \hspace{0.8cm}Maximize : f(\omega)= \lambda \mathcal{R}-(1-\lambda)\sigma= \lambda \sum_{i=1}^{n} \omega_{i}\pi_{i} - (1 - \lambda) \sum_{i=1}^{n}\sum_{j=1}^{n} \omega_{i}\omega_{j}\sigma_{ij}  
\end{equation}
$$Subject\hspace{1mm} to,\hspace{3mm} \sum_{i=1}^{n} \omega_{i} =1 , \hspace{3mm}\omega_{i} \geq 0 \hspace{3mm}\forall{i}= 1,2, \cdots, n$$ 
As the number of assets increases, the search spaces increase meaning the problem of optimizing the objective is becoming tougher. The proposed optimization method is then used to solve this problem for high-dimension cases. Two datasets are considered, one with 100 stocks from the Indian National Stock Exchange and another with 200 stocks from the New York Stock Exchange (data from 1st January 2019 to 1st January 2024). The proposed method together with all the method discussed in this work for comparison is used to solve the portfolio optimization problem for both the datasets and the results are provided in Table (\ref{table:portfolio}). The proposed method gives best performance for both datasets based on the mean Sharpe ratio (performance measure) over 30 trials.
\section{Conclusion}\label{Conclusion}
This work proposes a trust region-based BO technique that exploits the Gaussian posterior mean function near the point giving the current best objective function value as well as explores in the region with the highest uncertainty. By exploiting the posterior mean function at each iteration with a few sub-iterations, it avoids calculating the objective function for the exploitation stage while getting a good understanding of that region (as in the long run the mean function approximates the expensive objective function). The balance between exploitation and exploration provides a better idea about the objective function locally as well as covering the search space efficiently. Then the proposed method is compared with state-of-the-art trust region-based Bayesian optimization as well as general Bayesian optimization on a variety of synthetic test functions which are non-convex, non-differentiable, etc. with different dimensions varying from 2 to 500. The solutions of the proposed method are better within a predefined sample budget showing that the proposed approach can generalize the unknown objective function with less number of function evaluations making it suitable for the expensive functions. To show its acceptability to real-world problems, the proposed method is used to solve the portfolio optimization problem with 100 and 200 stocks from different stock markets. The performance of the method is also superior in terms of the Sharpe ratio compared to others.
\par Though this work promises to be a great success, still the method has its limitations; there is scope to generalize this method for multi-objective scenarios where the objectives are conflicting in nature, high computational complexity, and the method is bit explore heavy method but that can be tackled by increasing number of sub iterations in the exploitation stage.

\bibliography{sn-bibliography}

@article{yuan2015recent,
  title={Recent advances in trust region algorithms},
  author={Yuan, Ya-xiang},
  journal={Mathematical Programming},
  volume={151},
  pages={249--281},
  year={2015},
  publisher={Springer}
}

@article{levenberg1944method,
  title={A method for the solution of certain non-linear problems in least squares},
  author={Levenberg, Kenneth},
  journal={Quarterly of applied mathematics},
  volume={2},
  number={2},
  pages={164--168},
  year={1944}
}

@article{marquardt1963algorithm,
  title={An algorithm for least-squares estimation of nonlinear parameters},
  author={Marquardt, Donald W},
  journal={Journal of the society for Industrial and Applied Mathematics},
  volume={11},
  number={2},
  pages={431--441},
  year={1963},
  publisher={SIAM}
}

@article{regis2016trust,
  title={Trust regions in Kriging-based optimization with expected improvement},
  author={Regis, Rommel G},
  journal={Engineering optimization},
  volume={48},
  number={6},
  pages={1037--1059},
  year={2016},
  publisher={Taylor \& Francis}
}

@article{eriksson2019scalable,
  title={Scalable global optimization via local Bayesian optimization},
  author={Eriksson, David and Pearce, Michael and Gardner, Jacob and Turner, Ryan D and Poloczek, Matthias},
  journal={Advances in neural information processing systems},
  volume={32},
  year={2019}
}

@article{li2023trust,
  title={A trust region based local Bayesian optimization without exhausted optimization of acquisition function},
  author={Li, Qingxia and Fu, Anbing and Wei, Wenhong and Zhang, Yuhui},
  journal={Evolving Systems},
  volume={14},
  number={5},
  pages={839--858},
  year={2023},
  publisher={Springer}
}

@article{diouane2023trego,
  title={TREGO: a trust-region framework for efficient global optimization},
  author={Diouane, Youssef and Picheny, Victor and Riche, Rodolophe Le and Perrotolo, Alexandre Scotto Di},
  journal={Journal of Global Optimization},
  volume={86},
  number={1},
  pages={1--23},
  year={2023},
  publisher={Springer}
}

@ARTICLE{10093035,
  author={Lu, Qin and Polyzos, Konstantinos D. and Li, Bingcong and Giannakis, Georgios B.},
  journal={IEEE Transactions on Pattern Analysis and Machine Intelligence}, 
  title={Surrogate Modeling for Bayesian Optimization Beyond a Single Gaussian Process}, 
  year={2023},
  volume={45},
  number={9},
  pages={11283-11296},
  doi={10.1109/TPAMI.2023.3264741}}

@article{kushner1964new,
  title={A new method of locating the maximum point of an arbitrary multipeak curve in the presence of noise},
  author={Kushner, Harold J},
  journal = {J. Basic Eng.},
  volume= {86}, 
  number = {1},
  pages = {97--106},
  year={1964}
}

@inproceedings{movckus1975bayesian,
  title={On Bayesian methods for seeking the extremum},
  author={Mo{\v{c}}kus, Jonas},
  booktitle={Optimization techniques IFIP technical conference: Novosibirsk, July 1--7, 1974},
  pages={400--404},
  year={1975},
  organization={Springer}
}

@article{jones1998efficient,
  title={Efficient global optimization of expensive black-box functions},
  author={Jones, Donald R and Schonlau, Matthias and Welch, William J},
  journal={Journal of Global optimization},
  volume={13},
  pages={455--492},
  year={1998},
  publisher={Springer}
}

@inproceedings{kandasamy2015high,
  title={High dimensional Bayesian optimisation and bandits via additive models},
  author={Kandasamy, Kirthevasan and Schneider, Jeff and P{\'o}czos, Barnab{\'a}s},
  booktitle={International conference on machine learning},
  pages={295--304},
  year={2015},
  organization={PMLR}
}

@inproceedings{gardner2017discovering,
  title={Discovering and exploiting additive structure for Bayesian optimization},
  author={Gardner, Jacob and Guo, Chuan and Weinberger, Kilian and Garnett, Roman and Grosse, Roger},
  booktitle={Artificial Intelligence and Statistics},
  pages={1311--1319},
  year={2017},
  organization={PMLR}
}

@inproceedings{wang2018batched,
  title={Batched large-scale Bayesian optimization in high-dimensional spaces},
  author={Wang, Zi and Gehring, Clement and Kohli, Pushmeet and Jegelka, Stefanie},
  booktitle={International Conference on Artificial Intelligence and Statistics},
  pages={745--754},
  year={2018},
  organization={PMLR}
}

@article{wang2016bayesian,
  title={Bayesian optimization in a billion dimensions via random embeddings},
  author={Wang, Ziyu and Hutter, Frank and Zoghi, Masrour and Matheson, David and De Feitas, Nando},
  journal={Journal of Artificial Intelligence Research},
  volume={55},
  pages={361--387},
  year={2016}
}

@inproceedings{nayebi2019framework,
  title={A framework for Bayesian optimization in embedded subspaces},
  author={Nayebi, Amin and Munteanu, Alexander and Poloczek, Matthias},
  booktitle={International Conference on Machine Learning},
  pages={4752--4761},
  year={2019},
  organization={PMLR}
}

@inproceedings{gonzalez2016batch,
  title={Batch Bayesian optimization via local penalization},
  author={Gonz{\'a}lez, Javier and Dai, Zhenwen and Hennig, Philipp and Lawrence, Neil},
  booktitle={Artificial intelligence and statistics},
  pages={648--657},
  year={2016},
  organization={PMLR}
}

@inproceedings{chevalier2013fast,
  title={Fast computation of the multi-points expected improvement with applications in batch selection},
  author={Chevalier, Cl{\'e}ment and Ginsbourger, David},
  booktitle={International conference on learning and intelligent optimization},
  pages={59--69},
  year={2013},
  organization={Springer}
}

@article{shah2015parallel,
  title={Parallel predictive entropy search for batch global optimization of expensive objective functions},
  author={Shah, Amar and Ghahramani, Zoubin},
  journal={Advances in neural information processing systems},
  volume={28},
  year={2015}
}

@misc{OptimizationTestFunctions,
  title   = {OptimizationTestFunctions: Collection of optimization test functions and some useful methods for working with them},
  author  = {Demetry Pascal},
  year    = {2020},
  howpublished = {\url{https://pypi.org/project/OptimizationTestFunctions/}},
  note    = {Version 1.0.1}
}

@article{du2022bayesian,
  title={Bayesian optimization based dynamic ensemble for time series forecasting},
  author={Du, Liang and Gao, Ruobin and Suganthan, Ponnuthurai Nagaratnam and Wang, David ZW},
  journal={Information Sciences},
  volume={591},
  pages={155--175},
  year={2022},
  publisher={Elsevier}
}

@article{wang2023recent,
  title={Recent advances in Bayesian optimization},
  author={Wang, Xilu and Jin, Yaochu and Schmitt, Sebastian and Olhofer, Markus},
  journal={ACM Computing Surveys},
  volume={55},
  number={13s},
  pages={1--36},
  year={2023},
  publisher={ACM New York, NY}
}

@article{wang2024pre,
  title={Pre-trained Gaussian processes for Bayesian optimization},
  author={Wang, Zi and Dahl, George E and Swersky, Kevin and Lee, Chansoo and Nado, Zachary and Gilmer, Justin and Snoek, Jasper and Ghahramani, Zoubin},
  journal={Journal of Machine Learning Research},
  volume={25},
  number={212},
  pages={1--83},
  year={2024}
}

@book{berlinet2004reproducing,
  title={Reproducing Kernel Hilbert Spaces in Probability and Statistics},
  author={Berlinet, Alain and Thomas-Agnan, Christine},
  year={2004},
  publisher={Springer}
}

@inproceedings{chowdhury2017kernelized,
  title={On kernelized multi-armed bandits},
  author={Chowdhury, Sayak Ray and Gopalan, Aditya},
  booktitle={International Conference on Machine Learning},
  pages={844--853},
  year={2017},
  organization={PMLR}
}

@article{wan2021think,
  title={Think global and act local: Bayesian optimisation over high-dimensional categorical and mixed search spaces},
  author={Wan, Xingchen and Nguyen, Vu and Ha, Huong and Ru, Binxin and Lu, Cong and Osborne, Michael A},
  journal={arXiv preprint arXiv:2102.07188},
  year={2021}
}

@article{bertsekas1997nonlinear,
  title={Nonlinear programming},
  journal={Journal of the Operational Research Society},
  volume={48},
  number={3},
  pages={334--334},
  year={1997},
  publisher={Taylor \& Francis}
}

@inproceedings{namura2021surrogate,
  title={Surrogate-assisted reference vector adaptation to various pareto front shapes for many-objective Bayesian optimization},
  author={Namura, Nobuo},
  booktitle={2021 IEEE Congress on Evolutionary Computation (CEC)},
  pages={901--908},
  year={2021},
  organization={IEEE}
}

@article{chugh2016surrogate,
  title={A surrogate-assisted reference vector guided evolutionary algorithm for computationally expensive many-objective optimization},
  author={Chugh, Tinkle and Jin, Yaochu and Miettinen, Kaisa and Hakanen, Jussi and Sindhya, Karthik},
  journal={IEEE Transactions on Evolutionary Computation},
  volume={22},
  number={1},
  pages={129--142},
  year={2016},
  publisher={IEEE}
}

@article{ament2023unexpected,
  title={Unexpected improvements to expected improvement for bayesian optimization},
  author={Ament, Sebastian and Daulton, Samuel and Eriksson, David and Balandat, Maximilian and Bakshy, Eytan},
  journal={Advances in Neural Information Processing Systems},
  volume={36},
  pages={20577--20612},
  year={2023}
}
\newpage
\begin{appendices}

\section{Test functions}\label{TF}
\begin{table}[htbp]
\centering
\caption{Mathematical Expressions of Optimization Test Functions}\label{tab:test_functions}
\begin{tabular}{|l|p{12cm}|}
\hline
\textbf{Function} & \textbf{Mathematical Expression} \\
\hline
\textbf{Ackley} &
\[
f(\mathbf{x}) = 20 + e - 20 \exp\left(-0.2 \frac{1}{n} \sum_{i=1}^{n} x_i^2 \right) - \exp\left(\frac{1}{n} \sum_{i=1}^{n} \cos(2\pi x_i)\right) , x \in [-4, 4]^d
\]
\\
\hline
\textbf{AckleyTest} &
\[
f(\mathbf{x}) = \sum_{i=1}^{n-1} \left[ 3 \left( \cos(2x_i) + \sin(2x_{i+1}) \right) + e^{-0.2} \sqrt{x_i^2 + x_{i+1}^2} \right], x \in [-30, 30]^d
\]
\\
\hline
\textbf{Rosenbrock} &
\[
f(\mathbf{x}) = \sum_{i=1}^{n-1} \left[ 100(x_{i+1} - x_i^2)^2 + (x_i - 1)^2 \right], x \in [-2, 2]^d
\]
\\
\hline

\textbf{Griewank} &
\[
f(\mathbf{x}) = 1 + \frac{1}{4000} \sum_{i=1}^{n} x_i^2 - \prod_{i=1}^{n} \cos\left(\frac{x_i}{\sqrt{i+1}}\right), x \in [-600, 600]^d
\]
\\
\hline

\textbf{Penalty2} &
\[
f(\mathbf{x}) = 100 \left[ \sum_{i=1}^{d} \max\left(0, |x_i| - 5\right)^4 \right] + 0.1 [ 10 \sin^2(3\pi x_1) + (x_d - 1)^2 \left(1 + \sin(2\pi x_d^2)\right)
\]
\[
+ \sum_{i=1}^{d-1} (x_i - 1)^2 \left(1 + \sin(3\pi x_{i+1}^2)\right) ] , x \in [-60, 60]^d
\]
\\

\hline

\textbf{Quartic} &
\[
f(\mathbf{x}) = \sum_{i=1}^{n} (i+1) \cdot x_i^4 + \text{random}[0,1] , x \in [-1, 1]^d
\]
\\
\hline
\textbf{Rastrigin} &
\[
f(\mathbf{x}) = 10n + \sum_{i=1}^{n} \left( x_i^2 - 10 \cos(2\pi x_i) \right) , x \in [-5, 5]^d
\]
\\
\hline
\textbf{Schwefel Double} &
\[
f(\mathbf{x}) = \sum_{i=1}^{n} \left( \sum_{j=1}^{i} x_j \right)^2, x \in [-60, 60]^d
\]
\\
\hline
\end{tabular}
\end{table}

\begin{table}[htbp]
\centering
\caption{Mathematical Expressions of Optimization Test Functions}\label{tab:test_functions1}
\begin{tabular}{|l|p{13cm}|}
\hline
\textbf{Function} & \textbf{Mathematical Expression} \\
\hline

\textbf{Schwefel Max} &
\[
f(\mathbf{x}) = \max\left( |x_1|, |x_2|, \dots, |x_n| \right), x \in [-100, 100]^d
\]
\\
\hline

\textbf{Schwefel Sin} &
\[
f(\mathbf{x}) = -\sum_{i=1}^{n} x_i \sin\left( \sqrt{|x_i|} \right), x \in [-500, 500]^d
\]
\\
\hline

\textbf{Stairs} &
\[
f(\mathbf{x}) = \sum_{i=1}^{n} \left( \lfloor x_i + 0.5 \rfloor \right)^2, x \in [-6, 6]^d
\]
\\
\hline

\textbf{Abs} &
\[
f(\mathbf{x}) = \sum_{i=1}^{n} |x_i|, x \in [-10, 10]^d
\]
\\
\hline

\textbf{Scheffer} &
\[
f(\mathbf{x}) = 0.5 + \sum_{i=1}^{n-1} \frac{ \left( \sin(x_i^2 - x_{i+1}^2)^2 - 0.5 \right) }{ \left(1 + 0.001(x_i^2 + x_{i+1}^2)\right)^2 }, x \in [-7, 7]^d
\]
\\
\hline
\textbf{Eggholder} &
\[
f(\mathbf{x}) = -\sum_{i=1}^{n-1} \left[ (x_{i+1} + 47) \sin\left( \sqrt{ |x_{i+1} + \frac{x_i}{2} + 47| } \right) + x_i \sin\left( \sqrt{ |x_i - x_{i+1} - 47| } \right)  \right] , 
x \in [-600, 600]^d
\]
\\
\hline

\textbf{Weierstrass} &
\[
f(\mathbf{x}) = \sum_{i=1}^{n} \left[ \sum_{k=0}^{k_{\max}} a^k \cos\left( b^k \pi (x_i + 0.5) \right) \right] - n \sum_{k=0}^{k_{\max}} a^k \cos\left( b^k \pi \cdot 0.5 \right), x \in [-0.6, 0.6]^d
\]
\\
\hline

\end{tabular}
\end{table}
\end{appendices}

\end{document}